\newtheorem{theorem}{Theorem}[section]
\newtheorem{lemma}{Lemma}[section]
\newtheorem*{remark}{Remark}
\newcommand\norm[1]{\Vert#1\Vert}
\newcommand\abs[1]{\lvert#1\rvert}
\newcommand\dt{\,\text{d}t}
\newcommand\dx{\,\text{d}x}
\newcommand{\Nv}{N_\mathrm{v}}
\newcommand{\Ne}{N_\mathrm{e}}
\definecolor{black}{rgb}{0,0,0}
\definecolor{red}{rgb}{1,0,0}
\definecolor{blue}{rgb}{0,0,1}
\title{Computational Multiscale Methods for Linear\\ Poroelasticity with High Contrast\thanks{The authors acknowledge support from the  Germany/Hong Kong Joint Research Scheme sponsored by the German Academic Exchange Service (DAAD) under the project 57334719 and the Research Grants Council of Hong Kong with reference number G-CUHK405/16.}}
\author[2]{Shubin Fu}
\author[1]{Robert Altmann}
\author[2]{Eric T. Chung}
\author[1]{Roland Maier}
\author[1]{Daniel Peterseim}
\author[2]{Sai-Mang Pun\thanks{Corresponding Author}}
\affil[1]{Department of Mathematics, University of Augsburg, Germany}
\affil[2]{Department of Mathematics, The Chinese University of Hong Kong, Hong Kong}
\begin{document}
\maketitle
\begin{abstract}
In this work, we employ the Constraint Energy Minimizing Generalized Multiscale Finite Element Method (CEM-GMsFEM) to solve the problem of linear heterogeneous poroelasticity with coefficients of high contrast. The proposed method makes use of the idea of energy minimization with suitable constraints in order to generate efficient basis functions for the displacement and the pressure. These basis functions are constructed by solving a class of local auxiliary optimization problems based on eigenfunctions containing local information on the heterogeneity. Techniques of oversampling are adapted to enhance the computational performance. Convergence of first order is shown and illustrated by a number of numerical tests.
\end{abstract}

{\bf Keywords:} Linear poroelasticity, High contrast values, Generalized multiscale finite element method, Constraint energy minimization.
\section{Introduction}
Modeling and simulating the deformation of porous media saturated by an incompressible viscous fluid is important for a wide range of applications such as reservoir engineering in the field of geomechanics \cite{zoback2010reservoir} as well as environmental safety due to overburden subsidence and compaction \cite{geomechanics}. A reasonable model should couple the flow of the fluid with the behavior of the surrounding solid. Biot \cite{biot1941general} proposed a poroelasticity model that couples a Darcy flow with the linear elastic behavior of the porous medium. This model consists of coupled equations for the pressure and the displacements. The stress equation represents quasi-static elasticity coupled to the pressure gradients as a forcing term. On the other hand, the pressure equation is a Darcy-type parabolic equation with a time-dependent coupling to the volumetric strain.

Standard numerical methods such as the finite element method can be used to solve the poroelastic system in case that the medium is homogeneous~\cite{ern2009posteriori}, i.e., present parameters are constant. 
If the medium is strongly heterogeneous, however, the discretization of the domain needs to be sufficiently fine to obtain accurate results. Such a method is costly in the sense that the dimension of the resulting linear system is huge and therefore not feasible for practical computations. 

To alleviate the computational burden, model reduction 
techniques such as upscaling and multiscale methods can be applied. 
In upscaling methods \cite{durfolsky1991homo,gao2015numerical,mura2016two,PetS16,wu2002analysis}, one typically upscales the properties of the medium based on the theory of homogenization. The resulting systems can be solved on a coarse grid with standard techniques and the dimensions of the corresponding linear systems are much smaller. 
In multiscale methods \cite{ArPeWY07,ch03,efendiev2009multiscale,hw97,jennylt03}, however, one still solves the problem on a coarse grid but with precomputed multiscale basis functions using local information of the medium. For the problem of linear poroelasticity in highly heterogeneous media, this was done, e.g., in ~\cite{gmsfem_poro1}. Therein, a set of multiscale basis functions is constructed under the framework of the \textit{Generalized Multiscale Finite Element Method} (GMsFEM), see \cite{gmsfem_elasticity,egw10}. Another technique to solve multiscale problems is the \textit{Localized Orthogonal Decomposition method} (LOD), cf.~\cite{EngHMP16,HenP16,HenP13,maalqvist2014localization,Pet16}. This method was applied in \cite{maalqvist2017generalized} to linear heterogeneous thermoelasticity and optimal first-order convergence of the fully discretized system based on LOD and an implicit Euler discretization in time was proven. This approach was transferred to the present poroelastic setting in~\cite{lod_poro}. 

In the present work, we combine the ideas from LOD and GMsFEM as recently proposed in \cite{chung2017constraint}. For this, the basis functions are constructed by the principle of constraint energy minimization. Based on the GMsFEM, we first create so-called snapshot spaces and afterwards perform model reduction within those spaces by locally solving a class of well-designed spectral problems. The convergence of this \textit{Constraint Energy Minimizing GMsFEM} (CEM-GMsFEM) is analyzed in \cite{chung2017constraint}. Therein, it is shown that this method has a convergence rate proportional to the coarse grid size, which remains valid even in the presence of high contrast provided that sufficiently many basis functions are selected. The approach makes use of the ideas of localization \cite{KorPY18,KorY16,maalqvist2014localization,Pet16} and oversampling to compute multiscale basis functions in some oversampled subregions with the aim to obtain an appropriate orthogonality condition. 

Adopting the idea of CEM-GMsFEM, we propose a multiscale method for the problem of linear poroelasticity with high contrast and construct multiscale spaces for both, the pressure and the displacement. Based on the previous work \cite{lod_poro}, we prove the first-order convergence of the implicit Euler scheme combined with CEM-GMsFEM for the spatial discretization. Numerical results are provided to demonstrate the efficiency of the proposed method. While in the LOD approach the number of basis functions is limited by the number of coarse nodes, the present CEM-GMsFEM setting allows to add additional basis functions in a flexible way based on spectral properties of the differential operators. This improves the accuracy of the method in the presence of high contrast. It is shown that if enough basis functions are selected, the convergence of the method can be shown independently of the contrast. Unfortunately, a high number of basis functions directly influences the computational complexity of the method. The direct influence of the contrast on the needed number of basis functions is not known but numerical results indicate that a moderate number of basis functions, depending logarithmically on the contrast, seems sufficient. %\medskip %The overall complexity

The paper is organized as follows. In Section \ref{sec:modelpb}, we introduce the model problem and the functional analytical setting. The framework of CEM-GMsFEM including the construction of the basis functions and the resulting fully discrete method are presented in Section \ref{sec:offline}. In Section \ref{sec:convergence}, we analyze the method and provide the corresponding convergence results. Numerical experiments, proving the expected performance of the proposed method, are shown in Section~\ref{sec:num}.

\section{Preliminaries}\label{sec:modelpb}
\subsection{Model problem}
Let $\Omega \subset \mathbb{R}^d$ ($d = 2, 3$) be a bounded and polyhedral Lipschitz domain and  $T > 0$ a fixed time. We consider the problem of linear poroelasticity where we are interested in finding the pressure 
$p\colon [0,T]\times \Omega\to \mathbb{R}$ and the displacement field $u\colon [0,T]\times \Omega\to \mathbb{R}^d$ satisfying
\begin{subequations}
\label{eq:model}
\begin{alignat}{3}
\label{eq:model1}
-\nabla\cdot \sigma(u) +  \nabla (\alpha p)  &= 0\phantom{f}\qquad\text{in } (0,T] \times \Omega, \\
\label{eq:model2}
\partial_t \bigg( \alpha \nabla\cdot  u + \frac{1}{M}  p \bigg)  - \nabla \cdot \bigg( \frac{\kappa}{\nu} \nabla p\bigg) &= f\phantom{0}\qquad\text{in } (0,T] \times \Omega
\end{alignat}
\end{subequations}	
with boundary and initial conditions
\begin{subequations}
\begin{alignat}{3}
\label{eq:init1}
u&=0\phantom{p^0}\qquad \text{on }(0,T]\times \partial\Omega,\\
\label{eq:init2}
p&=0\phantom{p^0}\qquad \text{on }(0,T]\times \partial\Omega,\\
p(\cdot,0)&=p^0\phantom{0}\qquad\text{in }\Omega.
\end{alignat}
\end{subequations}
For the sake of simplicity, we only consider homogeneous Dirichlet boundary here. The extension to other types of boundary conditions is straightforward. In this model, the primary sources of the heterogeneity are the stress tensor $\sigma$, the permeability $\kappa$, and the Biot-Willis fluid-solid coupling coefficient $\alpha$. We denote by $M$ the Biot modulus and by $\nu$ the fluid viscosity. Both are assumed to be constant. Moreover, $f$ is a source term representing injection or production processes. Body forces, such as gravity, are neglected. In the case of a linear elastic stress-strain constitutive relation, the stress and strain tensors may be expressed as
 \begin{equation*}
 {\sigma(u)} = 2\mu  {\epsilon(u)} + \lambda (\nabla\cdot  {u}) \,  {\mathcal{I}} \quad \text{and} \quad
 {\epsilon}( {u}) = \frac{1}{2} \Big( \nabla  {u} + (\nabla  {u})^T \Big),
 \end{equation*}
 where $\mathcal{I}$ is the identity tensor and $\lambda,\, \mu>0$ are the Lam\'e coefficients,
 which can also be expressed in terms of the Young's modulus $E>0$
 and the Poisson ratio $\nu_p\in(-1, 1/2)$,
 \begin{equation*}
 \lambda=\frac{\nu_p}{(1-2\nu_p)(1+\nu_p)}E, \quad
 \mu=\frac{1}{2(1+\nu_p)}E.
 \end{equation*}
In the considered case of heterogeneous media, the coefficients $\mu$, $\lambda$, $\kappa$, and $\alpha$ may be highly oscillatory.
 
 \subsection{Function spaces}
In this subsection, we clarify the notation used throughout the article. 
We write $(\cdot,\cdot)$ to denote the inner product in $L^2(\Omega)$ and $\norm{\cdot}$ for the corresponding norm. 
Let $H^1(\Omega)$ be the classical Sobolev space with norm $\norm{v}_1 := \big( \norm{v}^2 + \norm{\nabla v}^2 \big)^{1/2}$ and $H_0^1(\Omega)$ the subspace of functions having a vanishing trace. We denote the corresponding dual space by $H^{-1}(\Omega)$. 
Moreover, we write $L^r(0,T; X)$ for the Bochner space with the norm 
$$ \norm{v}_{L^r(0,T;X)} := \bigg( \int_0^T \norm{v}_X^r \dt \bigg)^{1/r}, \quad 1\leq r < \infty,$$
$$ \norm{v}_{L^\infty(0,T;X)} := \sup_{0 \leq t \leq T} \norm{v}_X,$$
where $(X,\norm{\cdot}_X)$ is a Banach space. Also, we define $H^1(0,T;X) := \{ v \in L^2(0,T;X) : \partial_t v \in L^2(0,T;X) \}$.
To shorten notation, we define the spaces for the displacement $u$ and the pressure $p$ by
  \begin{equation*}
  V_0:=[H_0^1(\Omega)]^d,\quad Q_0:=H^1_0(\Omega).
  \end{equation*}
\subsection{Variational formulation and discretization}
In this subsection, we provide the variational formulation corresponding to the system \eqref{eq:model}. We first multiply the equations \eqref{eq:model1} and \eqref{eq:model2} with test functions from $V_0$ and $Q_0$, respectively. Then, applying Green's formula and making use of the boundary conditions \eqref{eq:init1} and \eqref{eq:init2}, we obtain the following variational problem: find $u(\cdot,t)\in V_0$ and $p(\cdot,t)\in Q_0$ such that
\begin{subequations}\label{eq:weak}
	\begin{alignat}{3}
   a(u,v) - d(v,p) &= 0, \label{eqn:v1} \\
   d(\partial_t u,q) + c(\partial_t p,q) + b(p,q) &= (f,q) \label{eqn:v2}      
\end{alignat} 
for all $v\in V_0$, $q\in Q_0$ and  
\begin{alignat}{3}
p(\cdot,0)=p^0 \in Q_0. \label{eqn:v3}
\end{alignat}
\end{subequations}
The bilinear forms are defined by 
   \begin{eqnarray*}
   && a(u,v) = \int_{\Omega} \sigma(u) : \epsilon(v)\dx, \qquad\quad b(p,q) = \int_{\Omega} \frac{\kappa}{\nu}\, \nabla p\cdot \nabla q \dx, \\
   && c(p,q) = \int_{\Omega} \frac{1}{M}\, p\, q\dx, \qquad\qquad\quad\,\, d(u,q) = \int_{\Omega} \alpha\,  (\nabla \cdot u)q\dx.
   \end{eqnarray*}   
Note that \eqref{eqn:v1} can be used to define a consistent initial value $u^0:= u(\cdot,0) \in V_0$. 
Using Korn's inequality \cite{ciarlet1988mathematical}, we get
$$ c_\sigma \norm{v}_1^2 \leq a(v,v) =: \norm{v}^2_a \leq C_\sigma \norm{v}_1^2$$
for all $v \in V_0$, where $c_\sigma$ and $C_\sigma$ are positive constants. Similarly, there exist two positive constants $c_\kappa$ and $C_\kappa$ such that 
$$ c_\kappa \norm{q}_1^2 \leq b(q,q) =: \norm{q}^2_b \leq C_\kappa \norm{q}_1^2$$ 
for all $q \in Q_0$.
The proof of existence and uniqueness of solutions $u$ and $p$ to \eqref{eq:weak} can be found in \cite{showalter2000diffusion}. 

To discretize the variational problem \eqref{eq:weak}, let $\mathcal{T}^h$ be a conforming partition for the computational domain 
$\Omega$ with (local) grid sizes $h_{K}:=\text{diam}(K)$ for $K\in \mathcal{T}^h$ and $h:=\text{max}_{K\in \mathcal{T}^h}h_K$. We remark that $\mathcal{T}^h$ is referred to as the \textit{fine grid}. Next, let $V_h$ and $Q_h$ be the standard finite element spaces of first order with respect to the fine grid $\mathcal{T}^h$, i.e.,
$$ V_h := \{ v \in V_0: v\lvert_K \text{ is a polynomial of degree } \leq 1 \text{ for all } K \in \mathcal{T}^h \}, $$
$$ Q_h := \{ q \in Q_0: q\lvert_K \text{ is a polynomial of degree } \leq 1 \text{ for all } K \in \mathcal{T}^h \}. $$
For the time discretization, let $\tau$ be a uniform time step and define $t_n=n\tau$ for $n =  0,1,\dots,N$ and $T=N\tau$. We use the backward Euler method, i.e., for $n = 1,\dots,N$ and given $p_h^0,\ u_h^0$, we aim to find $u_h^n\in V_h$ and $p_h^n\in Q_h$ such that
\begin{subequations}\label{eq:weak1}
	\begin{alignat}{3}
	a(u_h^n,v) - d(v,p_h^n) &= 0, \label{eq:weak1_a}\\
	d(D_{\tau}u_h^n,q) + c(D_{\tau}p_h^n,q) + b(p_h^n,q) &= (f^n,q)       \label{eq:weak1_b}
	\end{alignat} 
\end{subequations}
for all $v\in V_h$ and $q\in Q_h$. Here,
$D_{\tau}$ denotes the discrete time derivative, i.e., $D_{\tau}u_h^n:=(u_h^n-u_h^{n-1})/\tau$, and $f^n:=f(t_n)$.
The initial value $p_h^0\in Q_h$ is set to be the $L^2$
projection of $p^0\in Q_0$. The initial value $u_h^0$ for the displacement can be obtained by solving 
\begin{equation} \label{eq:initial_u}
a(u_h^0,v)=d(v,p_h^0)
\end{equation}
for all $v \in V_h$.
We remark that this classical approach will serve as a reference solution. The aim of this research is to construct a reduced system based on \eqref{eq:weak1}. To this end, we introduce finite-dimensional multiscale spaces $V_{\text{ms}} \subseteq V_0$ and $Q_{\text{ms}} \subseteq Q_0$, whose dimensions are much smaller, for approximating the solution on some feasible coarse grid.

\section{Construction of the multiscale spaces}\label{sec:offline}
In this section, we construct multiscale spaces on a coarse grid. 
\begin{figure}
	\centering
	\includegraphics[width=3in]{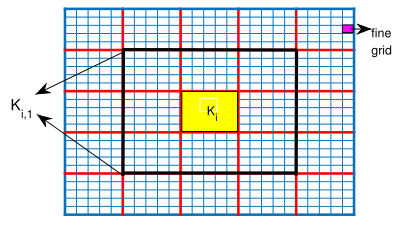}
	\caption{Illustration of the coarse grid $\mathcal{T}^H$, the fine grid $\mathcal{T}^h$, and the oversampling domain $K_{i,1}$.}
	\label{fig:grid}
\end{figure}
Let $\mathcal{T}^H$ be a conforming partition of the computational domain $\Omega$ such that $\mathcal{T}^h$ is a refinement of $\mathcal{T}^H$.
We call $\mathcal{T}^H$ the \textit{coarse grid} and each element of $\mathcal{T}^H$ is a coarse block. We denote with $H:=\text{max}_{K\in \mathcal{T}^H}\text{diam}(K)$ the coarse grid size. 
Let $\Nv$ be the total number of (interior) vertices of $\mathcal{T}^H$ and $\Ne$ be the total number of coarse elements. Let $\{x_i\}_{i=1}^{\Nv}$ be the set of nodes in $\mathcal{T}^H$. Figure \ref{fig:grid} illustrates the
fine grid and a coarse element $K_i$, and the oversampling domain $K_{i,1}$.
The construction of the multiscale spaces consists of two steps. The first step is to construct auxiliary multiscale spaces using the concept of GMsFEM. Based on the auxiliary spaces, we can then construct multiscale spaces containing basis functions whose energy are minimized in some subregions of the domain. These energy-minimized basis functions can then be used to construct a multiscale solution.

\subsection{Auxiliary spaces}
Here, we construct auxiliary multiscale basis functions by solving spectral problems on each coarse element $K_i$ using the spaces $V(K_i):= {V_0 \vert}_{ K_i}$ and $Q(K_i):= {Q_0 \vert}_{ K_i}$. More precisely, we consider the local eigenvalue problems: find $(\lambda_j^i,v_j^i)\in \mathbb{R}\times V(K_i)$ such that
\begin{equation}\label{eq:eig1}
a_i(v_j^i,v)=\lambda_j^i s^1_i(v_j^i,v)
\end{equation}
for all $v \in V(K_i)$ and find 
$(\zeta_j^i,q_j^i)\in \mathbb{R}\times Q(K_i)$ such that 
\begin{equation}\label{eq:eig2}
b_i(q_j^i,q)=\zeta_j^i s^2_i(q_j^i,q)
\end{equation}
for all $q \in Q(K_i)$, where $a_i(u,v) := \int_{K_i} \sigma(u) : \epsilon(v)\dx$,  $b_i(p,q) := \int_{K_i} \frac{\kappa}{\nu} \nabla p\cdot \nabla q \dx$, $s^1_i(u,v):=\int_{K_i} \tilde{\sigma}u\cdot v\dx$, and $s^2_i(p,q):=\int_{K_i} \tilde{\kappa}^2pq\dx$ with 
\begin{equation*}
\tilde\sigma := \sum_{i=1}^{\Nv}(\lambda+2\mu) | \nabla \chi_i^1 |^2,\qquad
\tilde\kappa := \sum_{i=1}^{\Nv}\frac{\kappa}{\nu} | \nabla \chi_i^2 |^2.
\end{equation*}
The functions ${\chi_i^1}$ and ${\chi_i^2}$ are neighborhood-wise defined
partition of unity functions \cite{bm97} on the coarse grid. To be more precise, for $k  =1,2$ the function $\chi_i^k$ satisfies $H \abs{\nabla \chi_i^k} = O(1)$, $0 \leq \chi_i^k \leq 1$, and $\sum_{i=1}^{\Nv} \chi_i^k = 1$. 
Assume that the eigenvalues $\{\lambda_j^i\}$ (resp. $\{ \zeta_j^i \}$) are arranged in ascending order and that the eigenfunctions satisfy the normalization condition $s_i^1(v_j^i,v_j^i)=1$ as well as $s_i^2(q_j^i,q_j^i)=1$. 
Next, choose  $J_i^1\in \mathbb{N}^+$ and define the local auxiliary space $V_{\text{aux}}(K_i):= \text{span} \{v_j^{i}:1\leq j \leq J_i^1 \}$. 
Similarly, we choose $J_i^2 \in \mathbb{N}^+$ and define $Q_{\text{aux}}(K_i) := \text{span} \{q_j^i: 1 \leq j \leq J_i^2\}$. 
%we denote $\Lambda^1$ and $\Lambda^2$ as the minimum first discarded eigenvalues of the spectral problem (\ref{eq:eig1}) and (\ref{eq:eig2}) respectively. 
Based on these local spaces, we define the global auxiliary spaces $V_{\text{aux}}$ and $Q_{\text{aux}}$ by
$$ V_{\text{aux}} := \bigoplus_{i=1}^{\Ne} V_{\text{aux}}(K_i)\subseteq V_0 \quad \text{and} \quad Q_{\text{aux}} := \bigoplus_{i=1}^{\Ne} Q_{\text{aux}}(K_i)\subseteq Q_0.$$
The corresponding inner products for the global auxiliary multiscale spaces are defined by
\begin{eqnarray*}
s^1(u,v):=\sum_{i=1}^{\Ne}s_i^1(u,v), \qquad %\quad \norm{v}_{s^1}:=s^1(v,v)^{\frac{1}{2}} \quad \forall v\in V_{\text{aux}}, \\
s^2(p,q):=\sum_{i=1}^{\Ne}s_i^2(p,q) %\quad \norm{q}_{s^2}:=s^2(q,q)^{\frac{1}{2}} \quad \forall q\in Q_{\text{aux}}.
\end{eqnarray*}
for all $u,v \in V_{\text{aux}}$ and $p,q\in Q_{\text{aux}}$. Further, we define projection operators $\pi^1: V_0 \to V_{\text{aux}}$ and $\pi^2: Q_0 \to Q_{\text{aux}}$ such that for all $v \in V_0,\ q \in Q_0$ it holds that
\begin{eqnarray*}
\pi^1(v):=\sum_{i=1}^{\Ne}\sum_{j=1}^{J_i^1}s^1_i(v,v_j^i)v_j^i, \qquad
\pi^2(q):=\sum_{i=1}^{\Ne}\sum_{j=1}^{J_i^2}s^2_i(q,q_j^i)q_j^i.
\end{eqnarray*}

\subsection{Multiscale spaces}
In this subsection, we construct the multiscale spaces for the practical computations. 
For each coarse element $K_i$, we define
the oversampled region $K_{i,m}\subset\Omega$ obtained by enlarging $K_i$ by $m$ layers, i.e.,
$$ K_{i,0} := K_i, \quad K_{i,m} := \bigcup \left\{ K\in \mathcal{T}^H : K \cap \overline{K_{i,m-1}} \neq \emptyset \right\}, \quad m = 1,2,\dots,$$
see Figure \ref{fig:grid} for an illustration of $K_{i,1}$. We define $V_0(K_{i,m}):=[H_0^1(K_{i,m})]^d$ and $Q_0(K_{i,m}):=H^1_0(K_{i,m})$. Then, for each pair of auxiliary functions $v_j^i\in V_{\text{aux}}$ and $q_j^i\in Q_{\text{aux}}$,
we solve the following  minimization problems:
find $\psi_{j,m}^i\in V_0(K_{i,m})$ such that
\begin{equation}\label{eq:mineq1}
\psi_{j,m}^i=\text{argmin}\Big\{a(\psi,\psi)+s^1\big(\pi^1(\psi)-v_j^i,\pi^1(\psi)-v_j^i\big):\,\psi\in V_0(K_{i,m})\Big\}
\end{equation}
and 
find $\phi_{j,m}^i\in Q_0(K_{i,m})$ such that
\begin{equation}\label{eq:mineq2}
\phi_{j,m}^i=\text{argmin}\Big\{b(\phi,\phi)+s^2\big(\pi^2(\phi)-q_j^i,\pi^2(\phi)-q_j^i\big):\,\phi\in Q_0(K_{i,m})\Big\}.
\end{equation}
Note that problem (\ref{eq:mineq1}) is equivalent to the local problem 
\begin{equation*}
a(\psi_{j,m}^i,v)+s^1\big(\pi^1(\psi_{j,m}^i),\pi^1(v)\big)=s^1\big(v_j^i,\pi^1(v)\big)
\end{equation*}
for all $v\in V_0(K_{i,m})$, whereas problem (\ref{eq:mineq2}) is equivalent to
\begin{equation*}
b(\phi_{j,m}^i,q)+s^2\big(\pi^2(\phi_{j,m}^i),\pi^2(v)\big)=s^2\big(q_j^i,\pi^2(q)\big)
\end{equation*}
for all $q\in Q_0(K_{i,m})$.
Finally, for fixed parameters $m$, $J_i^1$, and $J_i^2$, the multiscale spaces $V_{\text{ms}}$ and $Q_{\text{ms}}$ are defined by 
$$ V_{\text{ms}} := \text{span}\{ \psi_{j,m}^i: 1\leq j \leq J_i^1,\ 1\leq i \leq \Ne \} \quad \text{and} \quad Q_{\text{ms}} :=\text{span} \{ \phi_{j,m}^i: 1 \leq j \leq J_i^2,\ 1\leq i \leq \Ne\},$$
see also Figure \ref{fig:basis} for an illustration of such a multiscale basis function. 

 The multiscale basis functions can be interpreted as approximations to global multiscale basis functions $\psi_j^i\in V_0$ and $\phi_j^i\in Q_0$, similarly defined by
\begin{align*}
\psi_j^i &= \text{argmin}\Big\{a(\psi,\psi)+s^1\big(\pi^1(\psi)-v_j^i,\pi^1(\psi)-v_j^i\big): \,\psi\in V_0\Big\}, \\
\phi_j^i &= \text{argmin}\Big\{b(\phi,\phi)+s^2\big(\pi^2(\phi)-q_j^i,\pi^2(\phi)-q_j^i\big): \,\phi\in Q_0\Big\}.
\end{align*}
These basis functions have global support in the domain $\Omega$ but, as shown in \cite{chung2017constraint}, decay exponentially outside some local (oversampled) region. This property plays a vital role in the convergence analysis of CEM-GMsFEM and justifies the use of local basis functions in $V_{\text{ms}}$ and $Q_{\text{ms}}$. 
\begin{figure}[H]
	\centering
	\subfigure[first component of $\psi_{1,m}^i \in V_{\text{ms}}$]{
		\includegraphics[width=2.in]{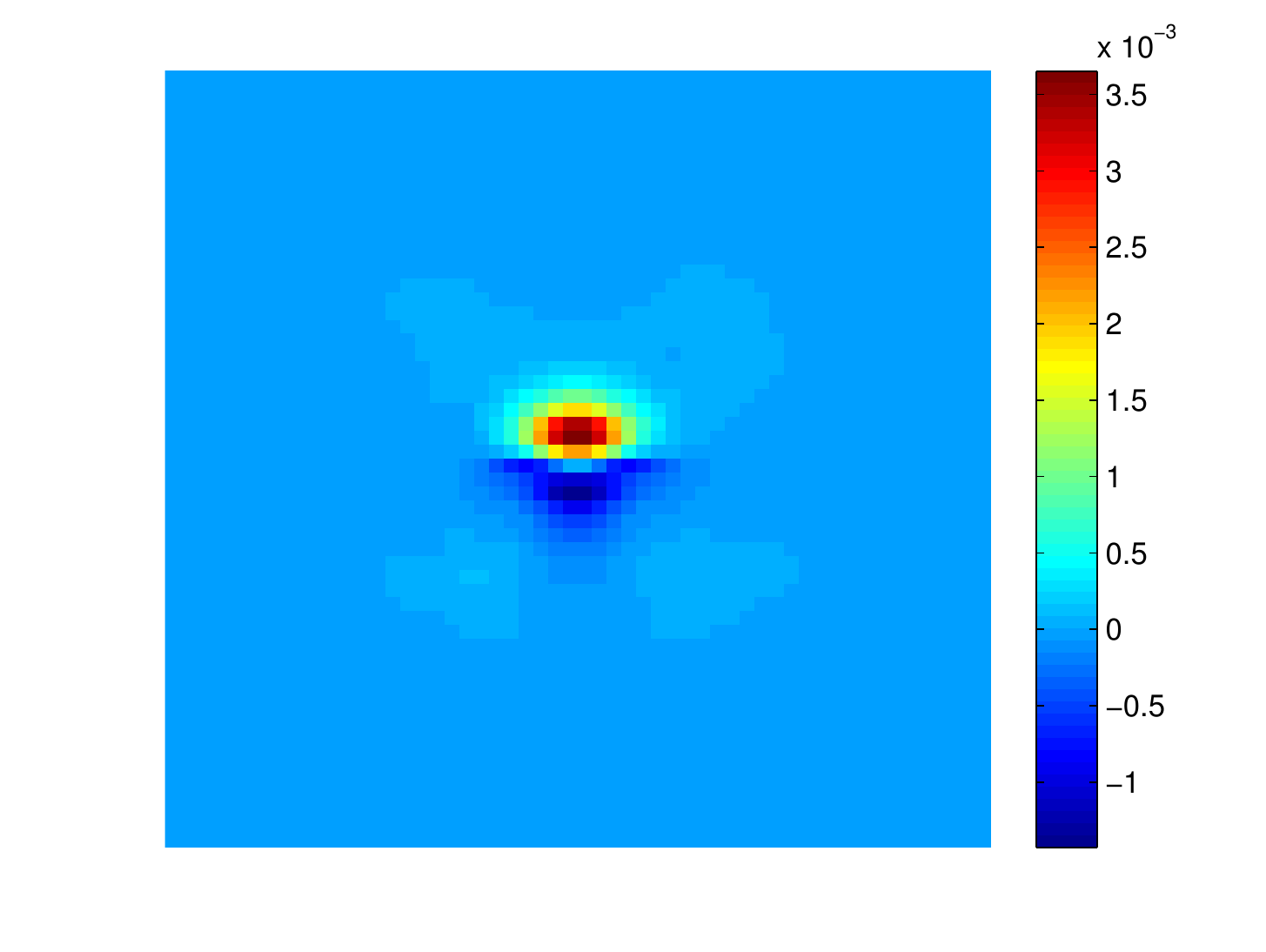}}
	\subfigure[second component of $\psi_{1,m}^i \in V_{\text{ms}}$]{
		\includegraphics[width=2.in]{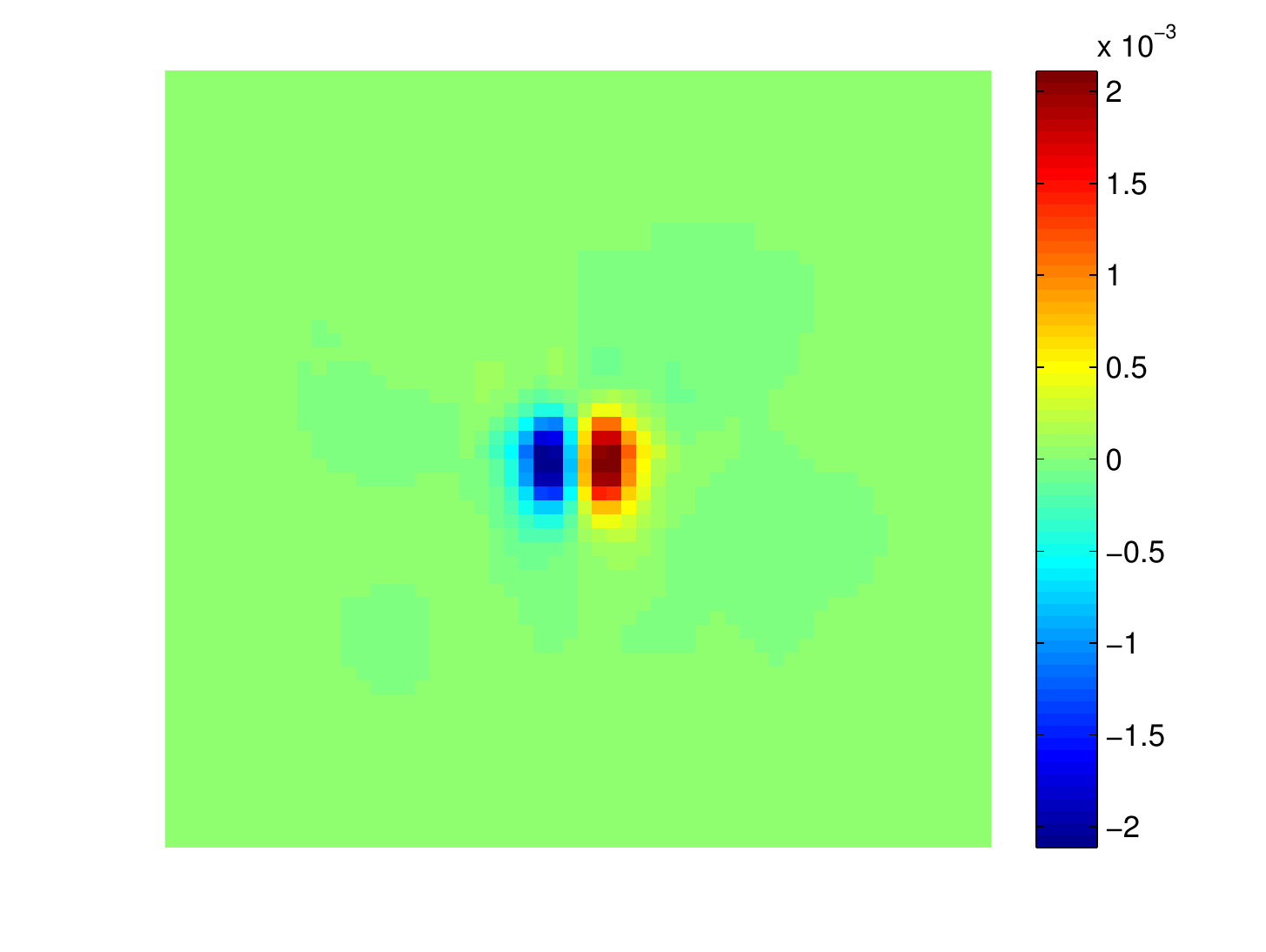}}
	\subfigure[$\phi_{1,m}^i \in Q_{\text{ms}}$]{
		\includegraphics[width=2.in]{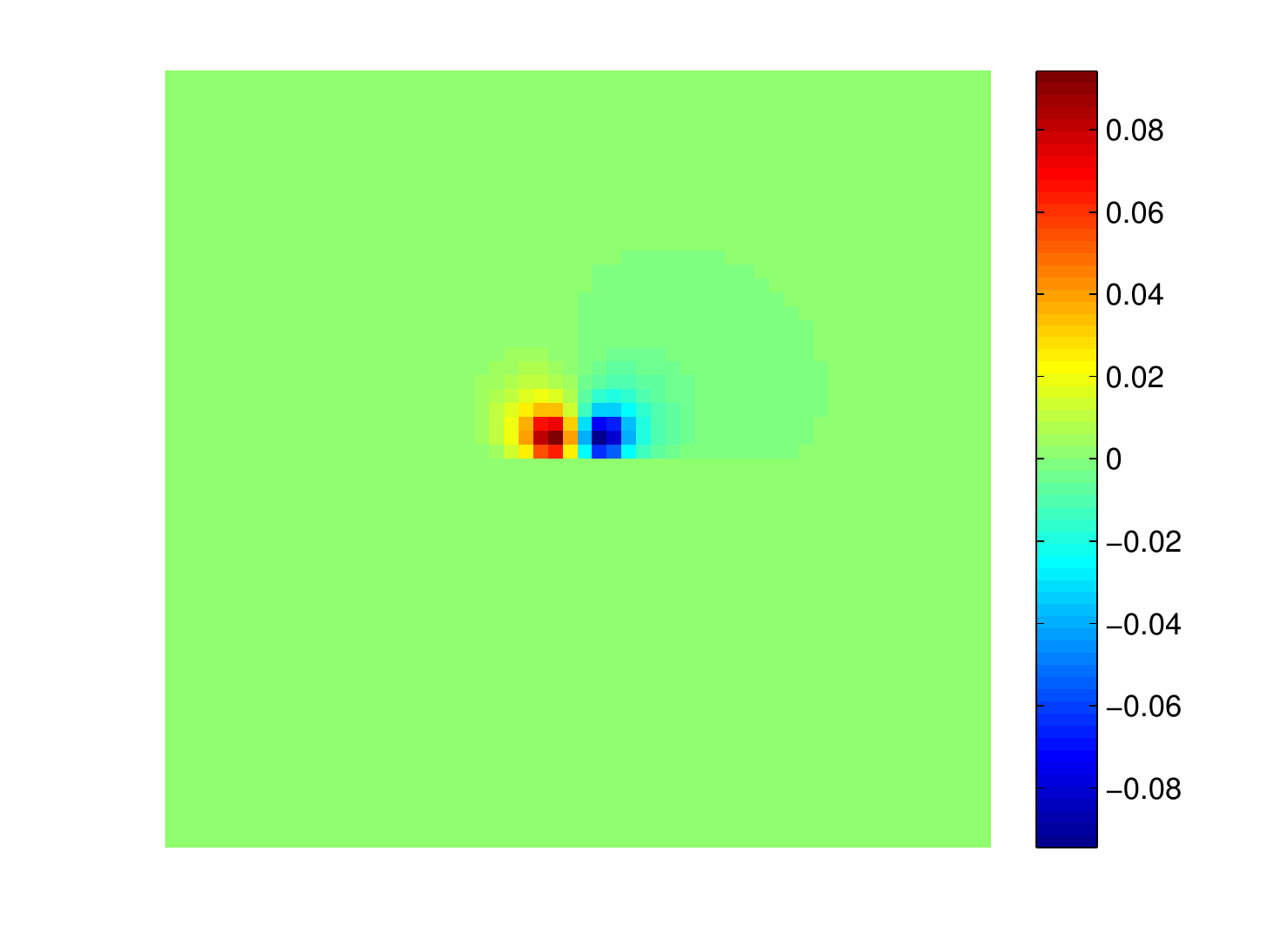}}
	\caption{First multiscale basis function of $V_{\text{ms}}$ and $Q_{\text{ms}}$, respectively, for $m=5$ and $H =\sqrt{2}/40$.}
%	m=5, J=4, and H=1/40, h=1/200. 
	\label{fig:basis} %% label for entire figure
\end{figure}

\subsection{The multiscale method}
In order to make the multiscale spaces~$V_{\text{ms}}$ and~$Q_{\text{ms}}$ suitable for computations, we need finite-dimensional analogons. For this, we follow the construction of the previous subsections, restricted the finite element space based on the fine grid~$\mathcal{T}^h$. This then yields the following fully discrete scheme: for $n=1,2,\dots,N$ find $(u_{\text{ms}}^n,p_{\text{ms}}^n) \in V_{\text{ms}} \times Q_{\text{ms}}$ that solve
\begin{subequations}\label{eq:weak2}
	\begin{alignat}{2}
	a(u_{\text{ms}}^n,v) - d(v,p_{\text{ms}}^n) &= 0, \\
	d(D_{\tau}u_{\text{ms}}^n,q) + c(D_{\tau}p_{\text{ms}}^n,q) + b(p_{\text{ms}}^n,q) &= (f^n,q)       
	\end{alignat} 
\end{subequations}
for all $(v,q) \in V_{\text{ms}} \times Q_{\text{ms}}$ with initial condition $p_{\text{ms}}^0\in Q_{\text{ms}}$ defined by
$$b(p_h^0 - p_{\text{ms}}^0,q) = 0 $$
for all $q\in Q_{\text{ms}}$. 
  
\section{Convergence analysis}\label{sec:convergence}
In this section, we analyze the proposed multiscale method \eqref{eq:weak2}. First, we recall some theoretical results related to the discretization of the problem of linear poroelasticity with finite elements. Throughout this section, $C$ denotes a generic constant which is independent of spatial discretization parameters and the time step size. Further, the notation $a \lesssim b$ is used equivalently to $a \leq Cb$.

\begin{lemma}[{\cite[Lem. 3.1]{lod_poro}}] \label{lem:well_posed}
Given initial data $p_h^0 \in Q_h$ and $u_h^0 \in V_h$ defined in \eqref{eq:initial_u}, system \eqref{eq:weak1} is well-posed. That is, there exists a unique solution, which can be bounded in terms of the initial values and the source function.
\end{lemma}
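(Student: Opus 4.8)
The plan is to establish well-posedness of the fully discrete backward Euler scheme \eqref{eq:weak1} by reducing it to a sequence of finite-dimensional linear problems, one per time step, and then showing that each of these linear systems admits a unique solution via a coercivity/positive-definiteness argument. I would first treat the base case: given $p_h^0\in Q_h$, equation \eqref{eq:initial_u} defines $u_h^0$ uniquely, since $a(\cdot,\cdot)$ is coercive on $V_h$ by Korn's inequality ($c_\sigma\norm{v}_1^2\leq a(v,v)$) and $d(v,p_h^0)$ is a bounded linear functional of $v$; Lax--Milgram then applies.

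Next, for the inductive step, I would fix $n\geq 1$ and assume $(u_h^{n-1},p_h^{n-1})$ is known. Writing out \eqref{eq:weak1_a}--\eqref{eq:weak1_b} with $D_\tau u_h^n = (u_h^n - u_h^{n-1})/\tau$, the pair $(u_h^n,p_h^n)$ solves a coupled linear system whose left-hand side defines a bilinear form on $(V_h\times Q_h)^2$, namely
\begin{equation*}
\mathcal{B}\big((u,p),(v,q)\big) := a(u,v) - d(v,p) + \tfrac{1}{\tau}\, d(u,q) + \tfrac{1}{\tau}\, c(p,q) + b(p,q),
\end{equation*}
and the right-hand side collects the known data $f^n$, $u_h^{n-1}$, $p_h^{n-1}$ into a bounded linear functional. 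Since everything is finite-dimensional, uniqueness of the solution is equivalent to existence, so it suffices to show that $\mathcal{B}\big((u,p),(v,q)\big)=0$ for all $(v,q)$ forces $(u,p)=0$. Testing with $(v,q)=(u,p)$ kills the skew-symmetric coupling terms up to the factor $1/\tau$: more precisely, choosing $v = u$ and $q = \tau p$ (or equivalently rescaling), the terms $-d(u,p)$ and $d(u,p)$ cancel, leaving $a(u,u) + c(p,p) + \tau\, b(p,p) = 0$. By coercivity of $a$ and $b$ and positivity of $c$, this yields $u=0$ and $p=0$. Hence $\mathcal{B}$ is injective on the finite-dimensional space, thus bijective, and \eqref{eq:weak1} is uniquely solvable at each time step; an induction over $n$ completes the existence-uniqueness claim.

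For the stability bound, I would test \eqref{eq:weak1_b} with $q = p_h^n$ and \eqref{eq:weak1_a} with $v = D_\tau u_h^n$, add the two identities so that the $d$-terms cancel, and obtain
\begin{equation*}
a(u_h^n, D_\tau u_h^n) + c(D_\tau p_h^n, p_h^n) + b(p_h^n,p_h^n) = (f^n, p_h^n).
\end{equation*}
Using the elementary inequality $2\langle x - y, x\rangle \geq \norm{x}^2 - \norm{y}^2$ (in the $a$- and $c$-inner products) to bound $a(u_h^n,D_\tau u_h^n) \geq \tfrac{1}{2\tau}(\norm{u_h^n}_a^2 - \norm{u_h^{n-1}}_a^2)$ and likewise for $c$, then absorbing $(f^n,p_h^n)$ via Cauchy--Schwarz, Poincaré, and Young's inequality against the coercive term $\norm{p_h^n}_b^2$, I would sum over $n$ (a discrete Grönwall/telescoping argument) to get a bound of $\norm{u_h^N}_a^2 + \norm{p_h^N}^2 + \tau\sum_{n}\norm{p_h^n}_b^2$ in terms of $\norm{u_h^0}_a$, $\norm{p_h^0}$, and $\tau\sum_n\norm{f^n}_{-1}^2$.

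The main obstacle is not any single deep estimate but the bookkeeping in the coupled energy argument: one has to pick the test functions so that the indefinite coupling terms $d(v,p)$ and $d(D_\tau u,q)$ cancel exactly, which forces testing the displacement equation with the \emph{time increment} $D_\tau u_h^n$ rather than with $u_h^n$ itself, and then carefully handle the resulting $a(u_h^n, D_\tau u_h^n)$ term via the polarization identity. Since this is quoted as \cite[Lem.~3.1]{lod_poro}, I would ultimately just cite that reference, but the sketch above is the self-contained route. The only subtlety worth flagging is that the scheme is of saddle-point type, so one should note that the finite-dimensional argument bypasses any inf-sup condition — injectivity plus finite dimension gives invertibility directly.
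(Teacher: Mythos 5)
Your proposal is correct: the per-time-step reduction to a finite-dimensional linear system, the injectivity argument via the test pair $(u,\tau p)$ that cancels the coupling terms, and the energy estimate obtained by testing \eqref{eq:weak1_a} with $D_\tau u_h^n$ and \eqref{eq:weak1_b} with $p_h^n$ are exactly the standard route, and this is essentially the argument of the cited reference (the paper itself gives no proof, only the citation to \cite[Lem.~3.1]{lod_poro}). The only cosmetic remark is that coercivity of $b$ on $Q_h\subset H_0^1(\Omega)$ already forces $p=0$ in the injectivity step, so positivity of $c$ is not even needed there.
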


\begin{lemma}[{\cite[Thm. 3.3]{maalqvist2017generalized}}] \label{lem:stab}
Assume $f \in L^\infty(0,T; L^2(\Omega)) \cap H^1(0,T;H^{-1}(\Omega))$. Then, for all $n = 1,\dots,N$, the fully discrete solution $(u_h^n,p_h^n)$ of \eqref{eq:weak1} satisfies the stability bound 
$$ \bigg( \tau \sum_{j=1}^n \norm{D_\tau u_h^j}_1^2 \bigg)^{1/2} +\bigg( \tau \sum_{j=1}^n \norm{D_\tau p_h^j}^2 \bigg)^{1/2}  + \norm{p_h^n}_1 \lesssim  \norm{p_h^0}_1 + \norm{f}_{L^2(0,t_n; L^2(\Omega))}.$$
Further, if $p_h^0 = 0$, we have 
$$ \norm{D_\tau u_h^n}_1 + \norm{D_\tau p_h^n} + \bigg( \tau \sum_{j=1}^n \norm{D_\tau p_h^j}_1^2 \bigg)^{1/2} \lesssim \norm{f}_{L^\infty(0,t_n; L^2(\Omega))} + \norm{\partial_t f}_{L^2(0,t_n;H^{-1}(\Omega))}$$
and for $f = 0$ it holds that 
$$ \norm{D_\tau u_h^n}_1 + \norm{D_\tau p_h^n} + t_n^{1/2} \norm{D_\tau p_h^n}_1 \lesssim t_n^{-1/2} \norm{p_h^0}_1.$$
\end{lemma}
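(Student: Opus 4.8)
All three bounds are \emph{discrete energy estimates}. They follow by testing the fully discrete equations \eqref{eq:weak1_a}--\eqref{eq:weak1_b} with carefully chosen functions and exploiting that $a$, $b$, and $c$ are symmetric and positive (semi-)definite, together with the coercivity estimates $\norm{v}_1 \lesssim \norm{v}_a$ and $\norm{q}_1 \lesssim \norm{q}_b$ recalled above, the identity $\norm{q}_c^2 := c(q,q) = M^{-1}\norm{q}^2$, and the algebraic fact $2\tau\, g(D_\tau x^n, x^n) \ge \norm{x^n}_g^2 - \norm{x^{n-1}}_g^2$ for any symmetric positive semidefinite bilinear form $g$. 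A crucial structural point is that, by the definition \eqref{eq:initial_u} of $u_h^0$, equation \eqref{eq:weak1_a} holds also for $n = 0$, so the whole scheme \eqref{eq:weak1} may be differenced in time; each differentiation, however, reduces the index range on which the resulting equations are available.

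\emph{Bounds one and two.} For the first bound, difference \eqref{eq:weak1_a} in time and test with $v = D_\tau u_h^n$ to obtain $\norm{D_\tau u_h^n}_a^2 = d(D_\tau u_h^n, D_\tau p_h^n)$; testing \eqref{eq:weak1_b} with $q = D_\tau p_h^n$ and substituting gives $\norm{D_\tau u_h^n}_a^2 + \norm{D_\tau p_h^n}_c^2 + b(p_h^n, D_\tau p_h^n) = (f^n, D_\tau p_h^n)$. Telescoping the $b$-term, estimating $(f^n, D_\tau p_h^n) \le \tfrac12\norm{D_\tau p_h^n}_c^2 + \tfrac{M}{2}\norm{f^n}^2$, absorbing, multiplying by $\tau$ and summing over $j=1,\dots,n$ bounds all three quantities $\tau\sum\norm{D_\tau u_h^j}_a^2$, $\tau\sum\norm{D_\tau p_h^j}_c^2$, $\norm{p_h^n}_b^2$ by $\norm{p_h^0}_b^2 + \tau\sum\norm{f^j}^2 \lesssim \norm{p_h^0}_1^2 + \norm{f}_{L^2(0,t_n;L^2(\Omega))}^2$, and the coercivity bounds give the first assertion. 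For the second bound, $p_h^0 = 0$ forces $u_h^0 = 0$ via \eqref{eq:initial_u}. Differentiating the scheme once in time and writing $w^n := D_\tau u_h^n$, $r^n := D_\tau p_h^n$, the same manipulation applied to the differenced equations (testing differenced \eqref{eq:weak1_a} with $v = D_\tau w^n$ and differenced \eqref{eq:weak1_b} with $q = r^n$, now using $(D_\tau f^n, r^n) \le \varepsilon\norm{r^n}_b^2 + C_\varepsilon\norm{D_\tau f^n}_{H^{-1}(\Omega)}^2$ and $\tau\sum\norm{D_\tau f^j}_{H^{-1}(\Omega)}^2 \lesssim \norm{\partial_t f}_{L^2(0,t_n;H^{-1}(\Omega))}^2$) bounds $\norm{w^n}_a^2 + \norm{r^n}_c^2 + \tau\sum_{j=2}^n\norm{r^j}_b^2$ in terms of $\norm{w^1}_a^2 + \norm{r^1}_c^2 + \norm{\partial_t f}_{L^2(0,t_n;H^{-1}(\Omega))}^2$. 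The first time step is handled by hand: \eqref{eq:weak1_a} differenced at $n=1$ is legitimate and, tested with $v=w^1$, gives $\norm{w^1}_a^2 = d(w^1,r^1)$, which inserted into \eqref{eq:weak1_b} at $n=1$ with $q=r^1$ yields, using $b(p_h^1,r^1) = \tau^{-1}\norm{p_h^1}_b^2 \ge 0$, the estimate $\norm{w^1}_a^2 + \norm{r^1}_c^2 + \tau\norm{r^1}_b^2 \lesssim \norm{f^1}^2 \le \norm{f}_{L^\infty(0,t_n;L^2(\Omega))}^2$. Collecting terms proves the second assertion.

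\emph{Bound three.} With $f=0$ the combination used for the second bound yields, for $n \ge 2$, the energy inequality $\tfrac{1}{2\tau}\big(\norm{w^n}_a^2-\norm{w^{n-1}}_a^2\big) + \tfrac{1}{2\tau}\big(\norm{r^n}_c^2-\norm{r^{n-1}}_c^2\big) + \norm{r^n}_b^2 \le 0$. Multiplying this by $2t_n = 2n\tau$ and summing over $n$, an Abel summation converts the telescoped differences into $t_N(\norm{w^N}_a^2+\norm{r^N}_c^2)$ plus the sum $\tau\sum_n(\norm{w^n}_a^2+\norm{r^n}_c^2)$, which is controlled by the first bound applied with $f=0$; the single leftover boundary contribution $t_1(\norm{w^1}_a^2+\norm{r^1}_c^2)$ is estimated directly from \eqref{eq:weak1_b} at $n=1$, keeping the $p_h^0$-term, so that $\norm{w^1}_a^2+\norm{r^1}_c^2 \lesssim \tau^{-1}\norm{p_h^0}_1^2$ and $t_1 = \tau$ cancels the $\tau^{-1}$. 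This gives $t_N(\norm{D_\tau u_h^N}_a^2 + \norm{D_\tau p_h^N}_c^2) \lesssim \norm{p_h^0}_1^2$ together with the byproduct $\tau\sum_n t_n\norm{D_\tau p_h^n}_b^2 \lesssim \norm{p_h^0}_1^2$. To reach the remaining term $t_N^{1/2}\norm{D_\tau p_h^N}_1$, one differentiates the scheme once more in time, obtains (for $n \ge 2$) $\norm{D_\tau w^n}_a^2 + \norm{D_\tau r^n}_c^2 + b(r^n, D_\tau r^n) = 0$, multiplies by $2t_n^2$, sums, and uses the byproduct $\tau\sum_n t_n\norm{r^n}_b^2 \lesssim \norm{p_h^0}_1^2$ for the interior terms and a direct first-step bound $\tau^2\norm{r^1}_b^2 \lesssim \norm{p_h^0}_1^2$ for the boundary term; this gives $t_N^2\norm{D_\tau p_h^N}_b^2 \lesssim \norm{p_h^0}_1^2$. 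Coercivity then yields all three terms of the third assertion.

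\emph{Main obstacle.} The coercivity/Young/telescoping bookkeeping that produces the energy inequalities is routine. The genuine difficulty lies in the smoothing estimate of the third bound: since each differentiation in time shrinks the admissible index range, the first one or two time steps must be treated separately, and it is exactly there that the discrete time derivatives blow up like negative powers of $\tau$; choosing the time weights (powers of $t_n = n\tau$) so that these blow-ups cancel \emph{exactly}, with constants independent of $\tau$ and of $h$, is the crux of the argument. A minor additional technicality is relating the nodal values $f^n = f(t_n)$ to the Bochner norms appearing on the right-hand sides, which is handled by standard quadrature or time-averaging arguments.
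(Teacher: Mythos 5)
The paper does not prove this lemma at all --- it is imported verbatim from \cite[Thm.~3.3]{maalqvist2017generalized} --- so there is no in-paper argument to compare against; your reconstruction (time-differencing the scheme, exploiting the cancellation of the coupling term $d(\cdot,\cdot)$, telescoping the symmetric forms, and using $t_n$-weighted summation by parts with separate treatment of the first step for the smoothing estimate) is exactly the standard proof given in that reference, and the steps check out. The only point needing care, which you already flag, is passing from the Riemann sums $\tau\sum_j\norm{f^j}^2$ and $\tau\sum_j\norm{D_\tau f^j}_{H^{-1}(\Omega)}^2$ to the stated Bochner norms, which is legitimate under the assumed regularity $f\in L^\infty(0,T;L^2(\Omega))\cap H^1(0,T;H^{-1}(\Omega))$.
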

\begin{comment}
\begin{remark}
If $f \in L^2(0,T;L^2(\Omega))$, one may easily show that the solution $(u,p)$ of \eqref{eq:weak} satisfies the following bound
$$  \norm{\partial_t u}_{L^2(0,t_n;V_0)}^2 + \norm{\partial_t p}_{L^2(0,t_n;L^2(\Omega))}^2 + \norm{p(t_n)}_1^2 \lesssim \norm{f}_{L^2(0,t_n;L^2(\Omega))}^2 + \norm{p^0}_1^2.$$
\end{remark}
\end{comment}

\begin{lemma}[{cf. \cite[Thm. 3.1]{ern2009posteriori}}] \label{lem:fine_scale_error}
Assume that the coefficients satisfy $\mu, \lambda, \kappa, \alpha \in W^{1,\infty}(\Omega)$. Further, let the solution $(u,p)$ of \eqref{eq:weak} be sufficiently smooth. Then, for each $n=1,\dots,N$ the fully discrete solution $(u_h^n, p_h^n)$ of \eqref{eq:weak1} satisfies the estimate 
$$ \norm{u(t_n) - u_h^n}_1 + \norm{p(t_n) - p_h^n} + \bigg(\tau \sum_{j=1}^n \norm{p(t_j) - p_h^j}_1^2 \bigg)^{1/2} \leq C_\text{osc} h + C\tau,$$
where the constant $C_\text{osc}$ scales with $\max\{ \norm{\mu}_{W^{1,\infty}(\Omega)},\norm{\lambda}_{W^{1,\infty}(\Omega)},\norm{\kappa}_{W^{1,\infty}(\Omega)},\norm{\alpha}_{W^{1,\infty}(\Omega)}\}$.
\end{lemma}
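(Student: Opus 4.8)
The plan is to follow the classical elliptic-projection energy argument for backward-Euler discretizations of coupled parabolic--elliptic systems, adapted to the Biot coupling as in~\cite{lod_poro,ern2009posteriori}. I would introduce the elastic Ritz projection $R_h^a\colon V_0\to V_h$, $a(R_h^aw-w,v)=0$ for all $v\in V_h$, and the Darcy Ritz projection $R_h^b\colon Q_0\to Q_h$, $b(R_h^br-r,q)=0$ for all $q\in Q_h$; both are well defined since $a$ and $b$ are coercive and bounded, and under the hypotheses $\mu,\lambda,\kappa,\alpha\in W^{1,\infty}(\Omega)$ together with $H^2$-regularity of the (sufficiently smooth) solution they obey the first-order estimates $\norm{w-R_h^aw}_1\lesssim h\,\norm{w}_2$ and $\norm{r-R_h^br}_1\lesssim h\,\norm{r}_2$, whose constants scale with the $W^{1,\infty}$-norms of the coefficients --- this is exactly the source of $C_\text{osc}$ and of its stated dependence. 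Splitting $u(t_n)-u_h^n=\rho_u^n+\theta_u^n$ with $\rho_u^n:=(\mathrm{Id}-R_h^a)u(t_n)$, $\theta_u^n:=R_h^au(t_n)-u_h^n\in V_h$, and analogously $p(t_n)-p_h^n=\rho_p^n+\theta_p^n$ with $\theta_p^n\in Q_h$, subtracting~\eqref{eq:weak1} from~\eqref{eq:weak} at $t_n$ and using the defining orthogonalities of the projections gives, for all $v\in V_h$, $q\in Q_h$,
\begin{align*}
a(\theta_u^n,v)-d(v,\theta_p^n) &= d(v,\rho_p^n), \\
d(D_\tau\theta_u^n,q)+c(D_\tau\theta_p^n,q)+b(\theta_p^n,q) &= -d(\omega_u^n+D_\tau\rho_u^n,q)-c(\omega_p^n+D_\tau\rho_p^n,q),
\end{align*}
where $\omega_u^n:=\partial_tu(t_n)-D_\tau u(t_n)$, $\omega_p^n:=\partial_tp(t_n)-D_\tau p(t_n)$ are the consistency errors of the time stepping, and $b(\rho_p^n,q)$ drops out by definition of $R_h^b$.

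For the main estimate, I would take the discrete time difference of the first identity and test it with $v=D_\tau\theta_u^n$, test the second identity with $q=D_\tau\theta_p^n$, and add: the coupling contributions $d(D_\tau\theta_u^n,D_\tau\theta_p^n)$ cancel --- the structural reason the argument closes --- leaving
\begin{equation*}
\norm{D_\tau\theta_u^n}_a^2+\norm{D_\tau\theta_p^n}_c^2+b(\theta_p^n,D_\tau\theta_p^n)=d(D_\tau\theta_u^n,D_\tau\rho_p^n)-d(\omega_u^n+D_\tau\rho_u^n,D_\tau\theta_p^n)-c(\omega_p^n+D_\tau\rho_p^n,D_\tau\theta_p^n),
\end{equation*}
with $\norm{\cdot}_c^2:=c(\cdot,\cdot)$. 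Using $2b(\theta_p^n,\theta_p^n-\theta_p^{n-1})=\norm{\theta_p^n}_b^2-\norm{\theta_p^{n-1}}_b^2+\norm{\theta_p^n-\theta_p^{n-1}}_b^2$ one gets $b(\theta_p^n,D_\tau\theta_p^n)\ge\tfrac1{2\tau}(\norm{\theta_p^n}_b^2-\norm{\theta_p^{n-1}}_b^2)$; the right-hand side is bounded by $\abs{d(v,q)}\lesssim\norm{v}_1\norm{q}$, the norm equivalences coming from Korn's inequality and the coercivity bounds for $a$ and $b$, and Young's inequality, after which the $\norm{D_\tau\theta_u^n}_a^2,\norm{D_\tau\theta_p^n}_c^2$ contributions are absorbed on the left. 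Multiplying by $\tau$ and summing over $n=1,\dots,N$ telescopes the pressure term and yields
\begin{equation*}
\tau\sum_{j=1}^N\norm{D_\tau\theta_u^j}_1^2+\tau\sum_{j=1}^N\norm{D_\tau\theta_p^j}^2+\norm{\theta_p^N}_b^2\lesssim\norm{\theta_p^0}_b^2+\tau\sum_{j=1}^N\big(\norm{D_\tau\rho_u^j}_1^2+\norm{D_\tau\rho_p^j}^2+\norm{\omega_u^j}_1^2+\norm{\omega_p^j}^2\big).
\end{equation*}
Since $D_\tau$ commutes with the projections, $D_\tau\rho_u^j=(\mathrm{Id}-R_h^a)D_\tau u(t_j)$ with $D_\tau u(t_j)=\tfrac1\tau\int_{t_{j-1}}^{t_j}\partial_tu\dt$, so $\tau\sum_j\norm{D_\tau\rho_u^j}_1^2\lesssim h^2\norm{\partial_tu}_{L^2(0,t_N;H^2)}^2$ and likewise for $\rho_p$; the consistency errors satisfy $\norm{\omega_u^j}_1\lesssim\tau^{1/2}(\int_{t_{j-1}}^{t_j}\norm{\partial_{tt}u}_1^2\dt)^{1/2}$, hence $\tau\sum_j\norm{\omega_u^j}_1^2\lesssim\tau^2\norm{\partial_{tt}u}_{L^2(0,t_N;H^1)}^2$, and similarly for $\omega_p$; finally $\norm{\theta_p^0}_b\lesssim h$ using $\theta_p^0=R_h^bp^0-p_h^0$ with $p_h^0$ the $L^2$-projection of $p^0$ and standard $L^2$-projection estimates. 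Carrying the summation up to an arbitrary $n\le N$ then gives $\tau\sum_j\norm{D_\tau\theta_u^j}_1^2+\tau\sum_j\norm{D_\tau\theta_p^j}^2+\max_n\norm{\theta_p^n}_b^2\lesssim(h+\tau)^2$.

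It remains to assemble the three asserted quantities. For the pressure, $\norm{p(t_n)-p_h^n}\le\norm{\rho_p^n}+\norm{\theta_p^n}\lesssim h+\norm{\theta_p^n}_b\lesssim h+\tau$. For the displacement, testing the first error identity at $t_n$ with $v=\theta_u^n$ gives $\norm{\theta_u^n}_a^2=d(\theta_u^n,\theta_p^n+\rho_p^n)\lesssim\norm{\theta_u^n}_1(\norm{\theta_p^n}+\norm{\rho_p^n})$, so $\norm{\theta_u^n}_1\lesssim\norm{\theta_p^n}+\norm{\rho_p^n}\lesssim h+\tau$ and hence $\norm{u(t_n)-u_h^n}_1\le\norm{\rho_u^n}_1+\norm{\theta_u^n}_1\lesssim h+\tau$. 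For the space--time pressure-gradient norm I would run a second energy estimate: testing the second error identity with $q=\theta_p^n$, using $c(D_\tau\theta_p^n,\theta_p^n)\ge\tfrac1{2\tau}(\norm{\theta_p^n}_c^2-\norm{\theta_p^{n-1}}_c^2)$, bounding $\abs{d(D_\tau\theta_u^n,\theta_p^n)}$ and the right-hand side with Young's and Poincaré's inequalities, multiplying by $\tau$ and summing yields $\tau\sum_{j=1}^n\norm{\theta_p^j}_b^2\lesssim\norm{\theta_p^0}_c^2+\tau\sum_{j=1}^n\norm{D_\tau\theta_u^j}_1^2+(h+\tau)^2\lesssim(h+\tau)^2$, the term $\tau\sum_j\norm{D_\tau\theta_u^j}_1^2$ already being controlled by the first estimate. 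Combined with $\tau\sum_j\norm{\rho_p^j}_1^2\lesssim h^2\,t_n\sup_j\norm{p(t_j)}_2^2\lesssim h^2$ this gives $\big(\tau\sum_{j=1}^n\norm{p(t_j)-p_h^j}_1^2\big)^{1/2}\lesssim h+\tau$, finishing the proof; well-posedness of the discrete system, used implicitly, is Lemma~\ref{lem:well_posed}.

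The step I expect to be the main obstacle is the handling of the Biot coupling: one must pair the two error identities in exactly the combination (time-differenced elasticity tested with $D_\tau\theta_u^n$, pressure equation tested with $D_\tau\theta_p^n$) that makes the $d(\cdot,\cdot)$-terms cancel, and then keep meticulous track of each remainder --- separating spatial projection errors from temporal consistency errors and estimating each in the norm in which it is actually controllable --- before absorbing the $\theta$-contributions; getting the powers of $h$ and $\tau$ right here, together with tracking the coefficient dependence that produces $C_\text{osc}$, is where the real work lies. A secondary nuisance is the mismatch between the $L^2$-projected initial datum $p_h^0$ and the $b$-Ritz projection used in the splitting, which contributes an extra $O(h)$ term that must be absorbed.
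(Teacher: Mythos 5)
The paper does not prove this lemma at all: it is quoted as a known result with the citation \cite[Thm.~3.1]{ern2009posteriori}, so there is no internal proof to compare against. Your argument is the standard elliptic-projection/backward-Euler energy argument that underlies that reference (Ritz projections for $a$ and $b$, the $\rho$--$\theta$ splitting, cancellation of the $d(D_\tau\theta_u^n,D_\tau\theta_p^n)$ coupling terms, telescoping, and a second energy estimate for the $\ell^2(H^1)$ pressure error), and it appears sound; the only details worth making explicit are that the time-differenced elasticity identity at $n=1$ requires the error identity at $n=0$, which holds precisely because $u_h^0$ is defined by \eqref{eq:initial_u} consistently with \eqref{eqn:v1}, and that the $W^{1,\infty}$-dependence of $C_\text{osc}$ enters through the $H^2$-norms of the exact solution rather than through the Ritz-projection constants themselves.
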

\begin{remark}
The previous lemma states the first-order convergence of the finite element method but also reveals the dependence of the involved constant on possible oscillations in the coefficients describing the media.
\end{remark}

Next, for any $u \in V_h$, $p\in Q_h$, we define $\widehat{u} \in V_{\text{ms}}$, $\widehat{p}\in Q_{\text{ms}}$ as the elliptic projection of $u$ and $p$ with respect to $a$ and $b$, i.e.,
$$ a(u-\widehat{u},v) = 0 \quad \text{and} \quad b(p-\widehat{p},q) = 0$$
for all $v \in V_{\text{ms}}$ and $q \in Q_{\text{ms}}$.
Moreover, let $\mathcal{A}\colon V_h \to V_h$ and $\mathcal{B}\colon Q_h \to Q_h$ be the Riesz's projections defined by
$$ (\mathcal{A}u, v) := a(u,v), \qquad (\mathcal{B}p,q) := b(p,q)$$ 
for all $v \in V_h$ and $q \in Q_h$.
The elliptic projections $\mathcal A$ and $\mathcal B$ fulfill the following estimates.
\begin{lemma} \label{lem:proj}
Let $V_\mathrm{ms},\,Q_\mathrm{ms}$ be the multiscale spaces defined in Section~\ref{sec:offline} with parameters $m,\ J^1_i,\ J^2_i$ sufficiently large. Then, for all $v\in V_h$ and $q \in Q_h$, it holds that
\begin{eqnarray}
	\norm{v- \widehat{v}} &\lesssim H\, \norm{v - \widehat{v}}_1, \label{eq:ap_bound} \\
	\norm{q- \widehat{q}} &\lesssim H\, \norm{q - \widehat{q}}_1, \label{eq:bp_bound}
\end{eqnarray}
and
\begin{eqnarray}
 \norm{v- \widehat{v}}_1 &\lesssim H\, \norm{\mathcal{A}v},  \label{eq:ap_riesz} \\
 \norm{q-\widehat{q}}_1 &\lesssim H\, \norm{\mathcal{B}q}.  \label{eq:bp_riesz}
\end{eqnarray}
\end{lemma}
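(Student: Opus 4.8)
The plan is to reduce everything to the \emph{global} multiscale spaces $V_{\mathrm{ms}}^{\mathrm{glo}}:=\mathrm{span}\{\psi_j^i\}$ and $Q_{\mathrm{ms}}^{\mathrm{glo}}:=\mathrm{span}\{\phi_j^i\}$, prove the four estimates for the associated $a$- and $b$-elliptic projections $\widehat v^{\mathrm{glo}}$, $\widehat q^{\mathrm{glo}}$, and then transfer them to the localized projections $\widehat v$, $\widehat q$ by means of the exponential decay of the global basis functions established in \cite{chung2017constraint}; this is where the number of oversampling layers $m$ enters, and taking $m\gtrsim\log(1/H)$ renders the localization error of higher order. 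Since the displacement estimates \eqref{eq:ap_bound}, \eqref{eq:ap_riesz} (involving $a$, $s^1$, $\pi^1$, $\mathcal A$, $\chi_i^1$) and the pressure estimates \eqref{eq:bp_bound}, \eqref{eq:bp_riesz} (involving $b$, $s^2$, $\pi^2$, $\mathcal B$, $\chi_i^2$) are proved in exactly the same way, I only describe the displacement case.

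First I would record the defining property of $V_{\mathrm{ms}}^{\mathrm{glo}}$: testing the Euler--Lagrange equations that define the global basis functions $\psi_j^i$ against an arbitrary $v$ with $\pi^1(v)=0$ gives $a(\psi_j^i,v)=0$, so $V_{\mathrm{ms}}^{\mathrm{glo}}$ is $a$-orthogonal to the fine-scale space $\widetilde V:=\{v\in V_h:\pi^1(v)=0\}$; a standard dimension count (as in \cite{chung2017constraint}) upgrades this to the $a$-orthogonal decomposition $V_h=V_{\mathrm{ms}}^{\mathrm{glo}}\oplus\widetilde V$. Hence for $v\in V_h$ the residual $w:=v-\widehat v^{\mathrm{glo}}$ lies in $\widetilde V$, i.e. $\pi^1(w)=0$. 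Next I would prove the Poincaré-type inequality $\norm{w}\lesssim H\,\norm{w}_1$ for $w\in\widetilde V$: on each $K_i$ the orthogonality $s_i^1(w,v_j^i)=0$ for $j\le J_i^1$ together with the spectral problem \eqref{eq:eig1} gives $s_i^1(w,w)\le(\lambda_{J_i^1+1}^i)^{-1}a_i(w,w)$, so summing over $i$ yields $s^1(w,w)\le\Lambda_1^{-1}\norm{w}_a^2$ with $\Lambda_1:=\min_i\lambda_{J_i^1+1}^i$; on the other hand, since $\{\chi_i^1\}$ is a piecewise-linear partition of unity one has $\sum_i|\nabla\chi_i^1|^2\gtrsim H^{-2}$ pointwise, hence $\tilde\sigma\gtrsim H^{-2}(\lambda+2\mu)$ and $s^1(w,w)\gtrsim H^{-2}\norm{w}^2$. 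Chaining the two bounds and using $\norm{w}_a\lesssim\norm{w}_1$ gives \eqref{eq:ap_bound} for $\widehat v^{\mathrm{glo}}$.

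Estimate \eqref{eq:ap_riesz} then follows from \eqref{eq:ap_bound} by a short duality argument: since $w=v-\widehat v^{\mathrm{glo}}$ is $a$-orthogonal to $\widehat v^{\mathrm{glo}}\in V_{\mathrm{ms}}^{\mathrm{glo}}$ and $w\in V_h$, the symmetry of $a$ and the definition of $\mathcal A$ give
\[
\norm{w}_a^2=a(w,v)=(\mathcal A v,w)\le\norm{\mathcal A v}\,\norm{w}\lesssim H\,\norm{\mathcal A v}\,\norm{w}_a,
\]
and dividing by $\norm{w}_a$ together with $c_\sigma\norm{\cdot}_1^2\le\norm{\cdot}_a^2$ yields $\norm{v-\widehat v^{\mathrm{glo}}}_1\lesssim H\norm{\mathcal A v}$. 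Applying the decay estimate of \cite{chung2017constraint} to pass from $\widehat v^{\mathrm{glo}}$ to $\widehat v$, and repeating the whole argument with $b$, $s^2$, $\pi^2$, $\mathcal B$, $\chi_i^2$ and $\zeta_{J_i^2+1}^i$ in place of their displacement counterparts, completes the proof.

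I expect the main obstacle to be controlling the constant in the Poincaré-type step in the high-contrast regime: the implied constant in $\norm{w}\lesssim H\norm{w}_a$ involves a ratio of the upper bound of $\lambda+2\mu$ (hidden in $\norm{\cdot}_a\lesssim\norm{\cdot}_1$ and in $C_\sigma$) to $\Lambda_1\,(\lambda+2\mu)_{\min}$, which is harmless only if $\Lambda_1\gtrsim H^{-2}$, i.e. only if each $J_i^1$ is large enough that the $(J_i^1{+}1)$-st eigenvalue has left the contrast-dominated part of the spectrum of \eqref{eq:eig1}; establishing that such a finite $J_i^1$ exists, and the analogue for $\zeta$, is precisely the content of the spectral analysis in \cite{chung2017constraint} and is what the hypothesis ``$m$, $J_i^1$, $J_i^2$ sufficiently large'' encodes. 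The localization step is likewise routine but relies on that same reference.
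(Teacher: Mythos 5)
Your overall architecture differs from the paper's: you reconstruct the internal machinery of \cite{chung2017constraint} --- the $a$-orthogonal splitting $V_h=V_{\mathrm{ms}}^{\mathrm{glo}}\oplus\widetilde V$ with $\widetilde V=\ker\pi^1$, the spectral Poincar\'e inequality on $\widetilde V$, and the exponential-decay localization --- whereas the paper treats all of that as a black box: it introduces the dual problem $a(z,w)=(v-\widehat v,w)$, cites \cite[Thm.~2]{chung2017constraint} for the localized CEM error estimate of $z$ (whose $L^2$ data is $v-\widehat v$), and obtains \eqref{eq:ap_bound} by an Aubin--Nitsche argument. Your treatment of the ideal (global) spaces is correct, and your derivation of \eqref{eq:ap_riesz} from \eqref{eq:ap_bound} via $\norm{v-\widehat v}_a^2=(\mathcal Av,v-\widehat v)$ is verbatim the paper's second step.

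The genuine gap is the transfer of \eqref{eq:ap_bound} from $\widehat v^{\mathrm{glo}}$ to the localized projection $\widehat v$, which you declare ``routine''. It is not: the residual $v-\widehat v$ of the \emph{localized} projection does not lie in $\ker\pi^1$, so your spectral Poincar\'e inequality does not apply to it directly, and the triangle inequality $\norm{v-\widehat v}\le\norm{v-\widehat v^{\mathrm{glo}}}+\norm{\widehat v^{\mathrm{glo}}-\widehat v}$ does not close the argument. The decay estimates of \cite{chung2017constraint} control the localization error $\norm{\widehat v^{\mathrm{glo}}-\widehat v}$ by global quantities of the type $E^{1/2}\norm{\mathcal Av}$ (the $L^2$ data of the elliptic problem that $v$ solves), not by $\norm{v-\widehat v}_1$; and for arbitrary $v\in V_h$ the target right-hand side $H\norm{v-\widehat v}_1$ can be arbitrarily small relative to any power of $H$ times $\norm{\mathcal Av}$ (take $v$ close to, but not in, $V_{\mathrm{ms}}$), so no choice of $m$ lets you absorb that term. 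The clean repair is exactly the paper's duality step: write $\norm{v-\widehat v}^2=a(z-\widehat z,v-\widehat v)\le\norm{z-\widehat z}_a\norm{v-\widehat v}_a$ using Galerkin orthogonality, and apply the localized error estimate to $z$, whose data is $v-\widehat v$ itself, so the localization error is automatically measured against the quantity being bounded. A minor further point: the spectral-gap threshold you state should be $\Lambda_1\gtrsim C_\sigma/(\lambda+2\mu)_{\min}$, i.e.\ of the order of the contrast, rather than $\Lambda_1\gtrsim H^{-2}$; the $H^{-2}$ scaling is already built into $s^1$ through $\abs{\nabla\chi_i^1}^2$.
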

\begin{proof} For $v\in V_h$, consider the variational problem
$$ a(z,w) = (v- \widehat{v},w)$$
for all $w \in V_h$. It was shown in \cite[Thm. 2]{chung2017constraint} that for sufficiently large $m$ and $J^1_i$ the ellipticity of $a(\cdot,\cdot)$ implies 
$\norm{z - \widehat{z}}_1 \lesssim H \norm{v - \widehat{v}}_1$.
Therefore, we have 
$$ \norm{v - \widehat{v}}^2 = a(z,v-\widehat{v}) \lesssim \norm{z-\widehat{z}}_1 \norm{v-\widehat{v}}_1 \lesssim H \norm{v-\widehat{v}} \norm{v-\widehat{v}}_1$$
and, thus, $\norm{v- \widehat{v}} \lesssim H \norm{v- \widehat{v}}_1$,
which proves \eqref{eq:ap_bound}. Further, we have
$$ \norm{v- \widehat{v}}_1^2 \lesssim a(v-\widehat{v},v-\widehat{v}) = a(v,v-\widehat{v}) = (\mathcal{A}v,v-\widehat{v}) \leq \norm{\mathcal{A}v} \norm{v-\widehat{v}}.$$
With \eqref{eq:ap_bound} and the inequality above, we get 
$$ \norm{v-\widehat{v}}_1^2 \lesssim H \norm{\mathcal{A}v} \norm{v-\widehat{v}}_1.$$
This proves \eqref{eq:ap_riesz}. Similarly, one may prove \eqref{eq:bp_bound} and \eqref{eq:bp_riesz} using the ellipticity of $b(\cdot,\cdot)$. 
\end{proof}

\begin{remark}
\label{rem_mJ}
In order to achieve the desired estimates in Lemma \ref{lem:proj}, it is necessary to choose the number of oversampling layers $m \approx O\big(\log(HC_p)\big)$, where $C_p$ is a constant depending on the Lam\'{e} coefficients, the permeability~$\kappa$, and $\alpha$, see~\cite[Sect.~6]{chung2017constraint} for more details. Further, the parameters $J^1_i$ and $J^2_i$ account for high contrast within the media and numerical results suggest a logarithmic dependence on the contrast. 
\end{remark}

The main theoretical result reads as follows.
\begin{theorem} \label{thm:main}
Assume sufficiently large parameters $m$, $J^1_i$, $J^2_i$, a source function $f \in L^\infty(0,T; L^2(\Omega)) \cap H^1(0,T;H^{-1}(\Omega))$ as well as initial data $p_h^0 \in Q_h$ and $u_h^0 \in V_h$ defined in \eqref{eq:initial_u}. Then, the error between the multiscale solution $(u_{\mathrm{ms}}^n,p_{\text{ms}}^n) \in V_{\mathrm{ms}} \times Q_{\mathrm{ms}}$ of \eqref{eq:weak2} and the fine-scale solution $(u_h^n,p_h^n) \in V_h \times Q_h$ of \eqref{eq:weak1} satisfies
\begin{eqnarray*}
 \norm{u_h^n - u_{\mathrm{ms}}^n}_1 + \norm{p_h^n - p_{\mathrm{ms}}^n}_1    \lesssim H \mathcal{S}_n+ t_n^{-1/2}H \norm{p_h^0}_1 \label{eq:thm}
 \end{eqnarray*}
for $n = 1,2,\dots,N$. Here, $\mathcal{S}_n$ only depends on the data and is defined by
$$ \mathcal{S}_n :=  \norm{p_h^0}_1  +  \norm{f}_{L^2(0,t_n;L^2(\Omega))}  +  \norm{f}_{L^\infty(0,t_n;L^2(\Omega))} + \norm{\partial_t f}_{L^2(0,t_n;H^{-1}(\Omega))} .$$
\end{theorem}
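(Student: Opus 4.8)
The plan is to follow the standard route for implicit‑Euler error analysis in poroelasticity, transported to the reduced spaces: decompose the error through the elliptic projections onto $V_{\mathrm{ms}}$ and $Q_{\mathrm{ms}}$, estimate the projection part with Lemma~\ref{lem:proj} and the fine‑scale stability bounds, and control the remaining discrete part by energy estimates for a perturbed discrete poroelastic system. Concretely, I would write $u_h^n - u_{\mathrm{ms}}^n = \rho_u^n + \theta_u^n$ and $p_h^n - p_{\mathrm{ms}}^n = \rho_p^n + \theta_p^n$, where $\rho_u^n := u_h^n - \widehat{u}_h^n$ and $\theta_u^n := \widehat{u}_h^n - u_{\mathrm{ms}}^n \in V_{\mathrm{ms}}$ (and analogously for the pressure), with $\widehat{u}_h^n,\ \widehat{p}_h^n$ the elliptic projections of the fine‑scale solution. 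Since $p_{\mathrm{ms}}^0$ is chosen as the $b$‑projection of $p_h^0$, one has $\theta_p^0 = 0$; from the relation defining $u_{\mathrm{ms}}^0$ (the $n=0$ instance of the first equation in \eqref{eq:weak2}) together with \eqref{eq:initial_u} one gets $a(\theta_u^0,v) = d(v,\rho_p^0)$ on $V_{\mathrm{ms}}$, hence $\|\theta_u^0\|_1 \lesssim \|\rho_p^0\| \lesssim H\|\rho_p^0\|_1 \lesssim H\|p_h^0\|_1$ using \eqref{eq:bp_bound} and $b$‑orthogonality of the projection. By the triangle inequality it then remains to bound $\|\rho_u^n\|_1$, $\|\rho_p^n\|_1$, $\|\theta_u^n\|_1$, $\|\theta_p^n\|_1$.

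For the projection errors I would invoke the Riesz‑projection estimates \eqref{eq:ap_riesz}, \eqref{eq:bp_riesz} of Lemma~\ref{lem:proj}, $\|\rho_u^n\|_1 \lesssim H\|\mathcal{A}u_h^n\|$ and $\|\rho_p^n\|_1 \lesssim H\|\mathcal{B}p_h^n\|$, together with the $L^2$‑versions \eqref{eq:ap_bound}, \eqref{eq:bp_bound}. Using the fine‑scale equations~\eqref{eq:weak1}, $\mathcal{B}p_h^n$ is expressed through $f^n$, $D_{\tau}u_h^n$, $D_{\tau}p_h^n$ (flow equation) and $\mathcal{A}u_h^n$ through $p_h^n$ (elasticity equation, exploiting the structure of $d(\cdot,\cdot)$); splitting the fine‑scale solution by linearity into its $f$‑driven part (vanishing initial pressure) and its $p_h^0$‑driven part ($f=0$), all of these are bounded by the three stability estimates of Lemma~\ref{lem:stab}. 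This yields $\|\rho_u^n\|_1 + \|\rho_p^n\|_1 \lesssim H\mathcal{S}_n + t_n^{-1/2}H\|p_h^0\|_1$ and is where the singular‑in‑time term of the theorem originates.

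Next I would set up the error system for the discrete components. Subtracting \eqref{eq:weak2} from \eqref{eq:weak1} restricted to test functions in $V_{\mathrm{ms}}\times Q_{\mathrm{ms}}$ and using the Galerkin orthogonalities $a(\rho_u^n,v)=0$, $b(\rho_p^n,q)=0$ for $v\in V_{\mathrm{ms}}$, $q\in Q_{\mathrm{ms}}$, one obtains, for all $v\in V_{\mathrm{ms}}$ and $q\in Q_{\mathrm{ms}}$,
\begin{align*}
a(\theta_u^n,v) - d(v,\theta_p^n) &= d(v,\rho_p^n), \\
d(D_{\tau}\theta_u^n,q) + c(D_{\tau}\theta_p^n,q) + b(\theta_p^n,q) &= -\,d(D_{\tau}\rho_u^n,q) - c(D_{\tau}\rho_p^n,q),
\end{align*}
that is, the discrete poroelastic system of~\eqref{eq:weak1} posed on the reduced spaces, with right‑hand side driven by the projection errors and with vanishing initial pressure error. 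Since $a$ and $b$ remain coercive on $V_{\mathrm{ms}}$ and $Q_{\mathrm{ms}}$, the energy machinery behind Lemma~\ref{lem:stab} applies. A basic estimate — testing the elasticity equation with $D_{\tau}\theta_u^n$ and the flow equation with $\theta_p^n$, adding, summing over time levels, and applying discrete Grönwall — controls $\|\theta_u^n\|_1$ and $\big(\tau\sum_{j\le n}\|\theta_p^j\|_1^2\big)^{1/2}$. An improved estimate — testing the flow equation with $D_{\tau}\theta_p^n$ and using the time‑differenced elasticity equation — controls $\|\theta_p^n\|_1$ pointwise in $n$; a time‑weighted variant (summing after multiplication by $t_j$, as for the third bound of Lemma~\ref{lem:stab}) reproduces the $t_n^{-1/2}$ behaviour of the $p_h^0$‑contribution. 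Inserting the projection‑error bounds of the previous step and collecting the four pieces gives the asserted estimate.

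The main obstacle is this last step, and within it two intertwined difficulties. First, the cross terms $d(D_{\tau}\theta_u^n,\rho_p^n)$ and $d(D_{\tau}\rho_u^n,\theta_p^n)$ appearing in the energy identities must be treated by summation by parts in time (Abel summation), trading the discrete time derivatives of the projection errors for the errors themselves plus terms absorbable by the left‑hand side; a careless estimate would instead leave $\tau\sum_{j\le n}\|D_{\tau}\rho_u^j\|_1^2$, which is not controlled uniformly in $\tau$ when the initial pressure is only $H^1$‑regular — so $D_{\tau}\rho_u$ must never appear on its own. Second, extracting the sharp $t_n^{-1/2}$ singularity near $t_n=0$ forces the time‑weighted estimate and a careful interplay between the weight $t_j$ and the Grönwall step. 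Both difficulties are faithful reductions of the arguments underlying Lemma~\ref{lem:stab} and~\cite{lod_poro} to the CEM‑GMsFEM subspaces, so no genuinely new idea is needed — only careful bookkeeping.
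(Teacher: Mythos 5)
Your proposal follows exactly the strategy the paper itself sketches (and attributes to \cite[Thm.~3.7]{lod_poro}): split the fine-scale solution by linearity into an $f$-driven and a $p_h^0$-driven part so that Lemma~\ref{lem:stab} applies, decompose the error through the elliptic projections onto $V_{\mathrm{ms}}\times Q_{\mathrm{ms}}$ and bound the projection part via Lemma~\ref{lem:proj}, and control the remaining components $\widehat{u}_h^n-u_{\mathrm{ms}}^n$, $\widehat{p}_h^n-p_{\mathrm{ms}}^n$ by testing the reduced system with suitable functions and reusing the energy/stability machinery. Your additional remarks on the summation-by-parts treatment of the cross terms and the time-weighted estimate for the $t_n^{-1/2}$ singularity fill in details the paper delegates to the cited reference; the approach is the same and appears sound.
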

\begin{proof}
The proof mainly follows the lines of the proof of~\cite[Thm. 3.7]{lod_poro} and makes use of the results of Lemma \ref{lem:proj}. Thus, we only give the ideas of the proof. 

Due to the linearity of the problem, we can decompose the fine-scale solutions into $ u_h^n  = \bar{u}_h^n + \tilde{u}_h^n$ and $p_h^n = \bar{p}_h^n + \tilde{p}_h^n$, where $(\bar{u}_h^n, \bar{p}_h^n)$ solves \eqref{eq:weak1} with $f = 0$ and $(\tilde{u}_h^n, \tilde{p}_h^n)$ solves \eqref{eq:weak1} with $p_h^0 = 0$. With this, the stability estimates in Lemma~\ref{lem:stab} can be applied. 
As a second step, we decompose the difference of the fine-scale and multiscale solution with the help of the introduced projection. The differences $u^n_h-{\widehat{u}}_h^n$ and $p^n_h-{\widehat{p}}_h^n$ can then be estimated by Lemma~\ref{lem:proj}. For the remaining parts ${\widehat{u}}_h^n-u^n_\mathrm{ms}$ and ${\widehat{p}}_h^n - p^n_\mathrm{ms}$ one needs to consider system~\eqref{eq:weak2} with appropriate test functions in combination with properties of the projection and Lemma~\ref{lem:stab}.
\end{proof}
\begin{remark}
Combining Lemma \ref{lem:fine_scale_error} and Theorem \ref{thm:main}, it follows that the proposed multiscale method converges as $H+ \tau$ with respect to the $L^\infty (0,T;V_0)$ norm for the displacement and the $L^2(0,T;Q_0) \cap L^\infty(0,T;L^2(\Omega))$ norm for the pressure.
\end{remark}

\section{Numerical results}\label{sec:num}
In this section, we present some numerical experiments to demonstrate the performance of our method. The computational domain is set to $\Omega:=(0,1)^2$, $T := 100$, and the time step size is chosen as $\tau := 5$. 
We consider homogeneous Dirichlet boundary conditions for both the pressure and the displacement. The initial pressure is $p^0(x):=x_1(1-x_1)x_2(1-x_2)$ and the force $f$ is set to be constant $1$. 
We assume heterogeneous coefficients that have different values in two subdomains. Considering Figure \ref{fig:model}, we call the blue region $\Omega_1$ and the red region $\Omega_2$. 

We choose $\kappa = 1$ in $\Omega_1$ and $\kappa = 10^4$ in $\Omega_2$, which implied high contrast in the parameters. The Young's modulus $E$ is set to be equal to the permeability $\kappa$ and the Biot-Willis fluid-solid coupling coefficient $\alpha$ is chosen uniformly distributed on each element, i.e., $\alpha |_K \sim U[0.5,1]$ for any $K \in \mathcal{T}^H$. We further set $\nu = 1$, $M=1$, and $\nu_p = 0.2$. As shown in Figure~\ref{fig:model}, the first model includes isolated short and long channels, while the second model is a 
fracture-type media including randomly distributed fractures.
The fine grid size is chosen as $h = \sqrt{2}/200$ in both models.

Recall that $m$ is the number of oversampling layers used to compute the multiscale basis and that $H$ denotes the coarse grid size.
Further, we set $J := J_i^1 = J_i^2$ as the number of basis functions used in the auxiliary spaces $V_{\text{aux}}$ and $Q_{\text{aux}}$.
We use the same parameter settings for the construction of $V_{\text{ms}}$ and $Q_{\text{ms}}$.
To quantify the accuracy of CEM-GMsFEM, we define 
relative weighted $L^2$ errors and energy errors for the displacement and the pressure at time $T$,
\begin{equation*}
e^u_{L^2}:=\frac{\norm{(\lambda+2\mu)(u_{\text{ms}}(\cdot,T)-u_h(\cdot,T))}_{L^2(\Omega)}}{\norm{(\lambda+2\mu)u_h(\cdot,T)}_{L^2(\Omega)}},\qquad
e^u_{a}:=\frac{\norm{u_{\text{ms}}(\cdot,T)-u_h(\cdot,T)}_a}{\norm{u_h(\cdot,T)}_a},
\end{equation*}
\begin{equation*}
e^p_{L^2}:=\frac{\norm{\frac{\kappa}{\nu}(p_{\text{ms}}(\cdot,T)-p_h(\cdot,T))}_{L^2(\Omega)}}{\norm{\frac{\kappa}{\nu}p_h(\cdot,T)}_{L^2(\Omega)}},\qquad
e^p_{b}:=\frac{\norm{p_{\text{ms}}(\cdot,T)-p_h(\cdot,T)}_b}{\norm{p_h(\cdot,T)}_b}, 
\end{equation*}
where $(u_h(\cdot,T),p_h(\cdot,T))$ is the reference solution computed on the fine grid and 
$(u_{\text{ms}}(\cdot,T),p_{\text{ms}}(\cdot,T))$ is the multiscale solution obtained by the proposed method \eqref{eq:weak2}. %Here, we denote $\norm{v}_a := \sqrt{a(v,v)}$ and $\norm{q}_b := \sqrt{b(q,q)}$ for all $v \in V_0$ and $q \in Q_0$. 
Based on the above theory, we expect that the multiscale solution converges linearly in $H$ with respect to the energy norms and that the results may be improved by either using more basis functions (increase $J$) or enlarging the oversampling region (increase $m$).

\begin{figure}
	\centering
	\subfigure[Test model $1$.]{
		\includegraphics[width=2.5in]{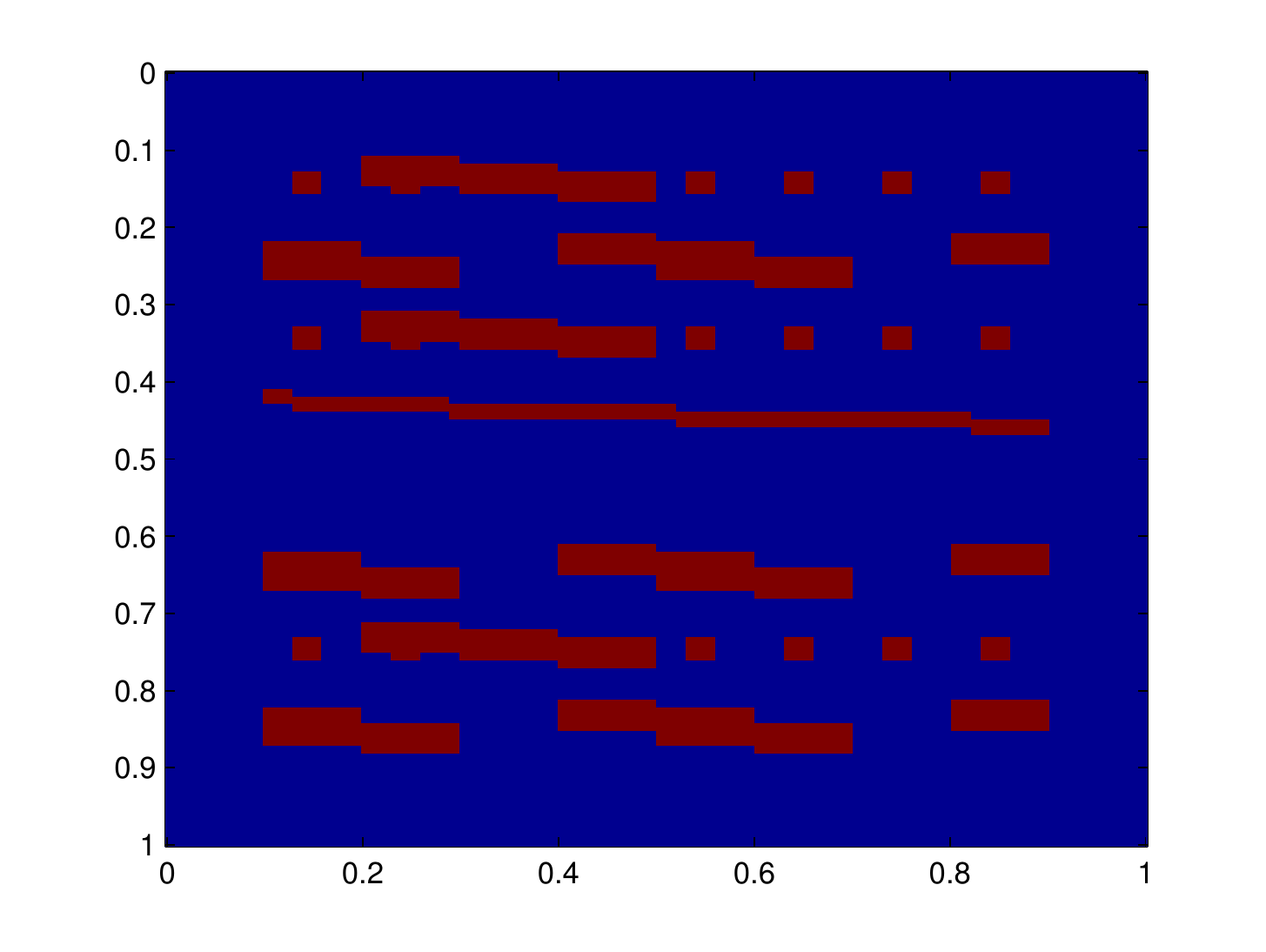}}
	\subfigure[Test model $2$.]{
		\includegraphics[width=2.5in]{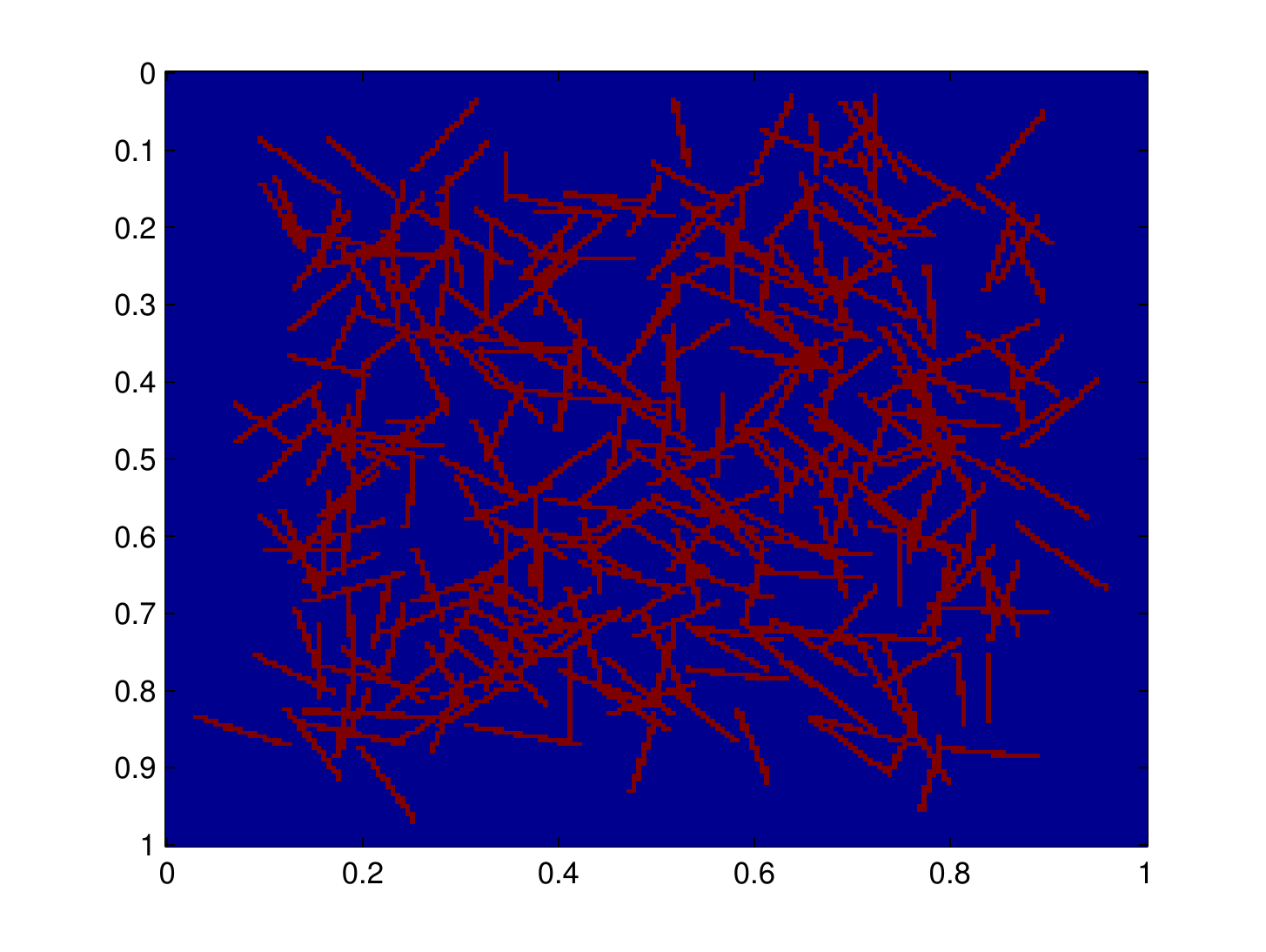}}
	\caption{Illustration of the subdomains $\Omega_1$ (blue) and $\Omega_2$ (red) for the two test models that define the heterogeneous coefficients.}
	\label{fig:model} 
\end{figure}

\subsection{Test model 1}
In this subsection, the numerical results for test model $1$ are presented. 
First, we investigate the convergence behavior of the CEM-GMsFEM solution
with respect to the coarse grid size. As suggested in Remark~\ref{rem_mJ}, we set the number of oversampling layers to $m=4\lfloor\text{log}(H)/\text{log}(\sqrt{2}/10)\rfloor$ and $J=4$ to form the auxiliary spaces. The results are presented in Table \ref{ta:h1}.
One may find that the error decreases as $H$ decreases and that the
CEM-GMsFEM solution converges linearly in the energy norms and quadratically in $L^2$. In Figure \ref{fig:ovnb1} (left), we present the results when using different numbers of oversampling layers with fixed coarse grid size $H = \sqrt{2}/40$.

It is clearly visible that the error decreases faster if more oversampling layers are added. Further, once the number of oversampling layers exceeds a certain number, the error stagnates. To investigate the influence of using different numbers of basis functions, we use a fixed coarse grid with grid size $H=\sqrt{2}/40$ and a fixed number of oversampling layers. The errors in the energy norms are presented in Figure \ref{fig:ovnb1} (right). For $m=6$ there is no observable decay for $e_{b}^p$ by using more basis functions. One of the reasons for that is the fact that the error for the pressure is already very small even if only $1$ basis is used. On the other hand, due to the high contrast, the error of the displacement $e_{a}^u$ decreases as the number of local basis functions increases. 
Once the number of local basis functions exceeds a certain level, the error decays slower. This happens when also the decay of the eigenvalues slows down. Figures \ref{fig:fine1} and \ref{fig:ms1} show the reference solution and the multiscale solution at $T=100$, respectively. One can see that the proposed method captures most of the details of the reference solution.
 
\begin{table}
	\centering \begin{tabular}{c|c|c||c|c|c|c}%\hline
		$J$ &$H$& $m$  & $e_{L^2}^u$   & $e_{a}^u$& $e_{L^2}^p$   & $e_{b}^p$ \tabularnewline\hline
		4&$\sqrt{2}$/10	&4&9.41e-03 &  1.14e-01 & 6.05e-03 & 5.79e-02     \tabularnewline\hline
		4&$\sqrt{2}$/20	&5&1.22e-03 &  7.39e-02 & 8.75e-04 & 2.29e-02     \tabularnewline\hline
		4&$\sqrt{2}$/40	&6&2.08e-04 &  2.08e-02 & 1.58e-04 & 9.64e-03    \tabularnewline%\hline
	\end{tabular}
	\caption{Numerical results with varying coarse grid size $H$ for test model $1$.} 
\label{ta:h1}
\end{table}%%% %log(H)/log(1/10)*4

\begin{figure}
\centering
% This file was created by matlab2tikz.
%
%The latest updates can be retrieved from
%  http://www.mathworks.com/matlabcentral/fileexchange/22022-matlab2tikz-matlab2tikz
%where you can also make suggestions and rate matlab2tikz.
%
\begin{tikzpicture}

\begin{axis}[%
width=2.4in,
height=2in,
scale only axis,
xmin=1,
xmax=8,
xlabel={Number of oversampling layers $m$},
ymin=0,
ymax=1,
ylabel={Relative energy error},
axis background/.style={fill=white},
legend style={legend cell align=left, align=left, draw=black},
legend style={at={(0.55,0.75)},anchor=west}
]
\addplot [color=red, mark size=2.5pt, mark=square, mark options=solid]
  table[row sep=crcr]{%
1	0.9914\\
2	0.9828\\
3	0.7324\\
4	0.0954\\
5	0.0221\\
6	0.0217\\
7	0.0217\\
8	0.0216\\
};
\addlegendentry{$u$ ($J=4$)}

\addplot [color=red, dashed, mark size=2.5pt, mark=square, mark options=solid]
  table[row sep=crcr]{%
1	0.9653\\
2	0.9482\\
3	0.76\\
4	0.2232\\
5	0.046\\
6	0.0096\\
7	0.003\\
8	0.0023\\
};
\addlegendentry{$p$ ($J=4$)}

\addplot [color=blue, mark size=2.5pt, mark=o, mark options=solid]
  table[row sep=crcr]{%
1	0.9909\\
2	0.9798\\
3	0.7729\\
4	0.26\\
5	0.0713\\
6	0.0456\\
7	0.0244\\
8	0.0239\\
};
\addlegendentry{$u$ ($J=2$)}

\addplot [color=blue, dashed, mark size=2.5pt, mark=o, mark options=solid]
  table[row sep=crcr]{%
1	0.9642\\
2	0.9411\\
3	0.6636\\
4	0.1481\\
5	0.0273\\
6	0.0056\\
7	0.0026\\
8	0.0025\\
};
\addlegendentry{$p$ ($J=2$)}

\end{axis}
\end{tikzpicture}%
% This file was created by matlab2tikz.
%
%The latest updates can be retrieved from
%  http://www.mathworks.com/matlabcentral/fileexchange/22022-matlab2tikz-matlab2tikz
%where you can also make suggestions and rate matlab2tikz.
%
\begin{tikzpicture}

\begin{axis}[%
width=2.4in,
height=2in,
scale only axis,
xmin=1,
xmax=8,
xlabel={Number of basis functions $J$},
ymin=0,
ymax=1,
ylabel={Relative energy error},
axis background/.style={fill=white},
legend style={legend cell align=left, align=left, draw=black},
legend style={at={(0.55,0.3)},anchor=west}
]
\addplot [color=red, mark size=2.5pt, mark=square, mark options=solid]
  table[row sep=crcr]{%
1	0.7809\\
2	0.0257\\
3	0.0231\\
4	0.0217\\
5	0.0197\\
6	0.0103\\
7	0.0097\\
8	0.0089\\
};
\addlegendentry{$u$ ($m=6$)}

\addplot [color=red, dashed, mark size=2.5pt, mark=square, mark options=solid]
  table[row sep=crcr]{%
1	0.0092\\
2	0.0056\\
3	0.0094\\
4	0.0096\\
5	0.009\\
6	0.0085\\
7	0.0084\\
8	0.0064\\
};
\addlegendentry{$p$ ($m=6$)}

\addplot [color=blue, mark size=2.5pt, mark=o, mark options=solid]
  table[row sep=crcr]{%
1	0.899\\
2	0.749\\
3	0.7256\\
4	0.7323\\
5	0.6866\\
6	0.6581\\
7	0.655\\
8	0.5591\\
};
\addlegendentry{$u$ ($m=3$)}

\addplot [color=blue, dashed, mark size=2.5pt, mark=o, mark options=solid]
  table[row sep=crcr]{%
1	0.5172\\
2	0.6638\\
3	0.755\\
4	0.76\\
5	0.7199\\
6	0.6867\\
7	0.6847\\
8	0.6162\\
};
\addlegendentry{$p$ ($m=3$)}

\end{axis}
\end{tikzpicture}%
\caption{Relative energy error (test model 1) for $H=\sqrt{2}/40$ and fixed $J$ (left), fixed $m$ (right).}
\label{fig:ovnb1}
\end{figure}

\begin{figure}
	\centering
	\subfigure[Pressure $p$.]{
		\includegraphics[width=2in]{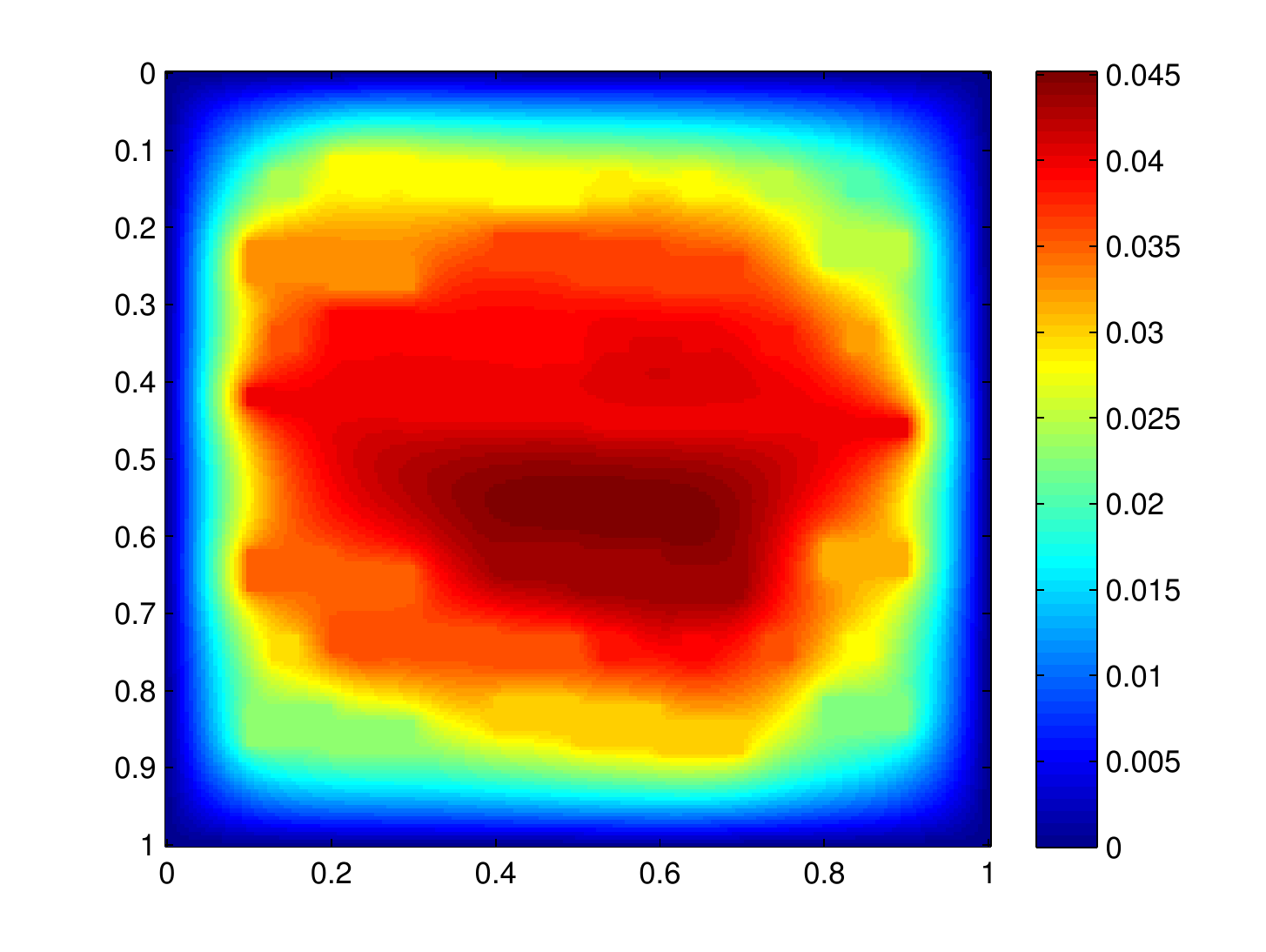}}
	\subfigure[First component of $u$.]{
		\includegraphics[width=2in]{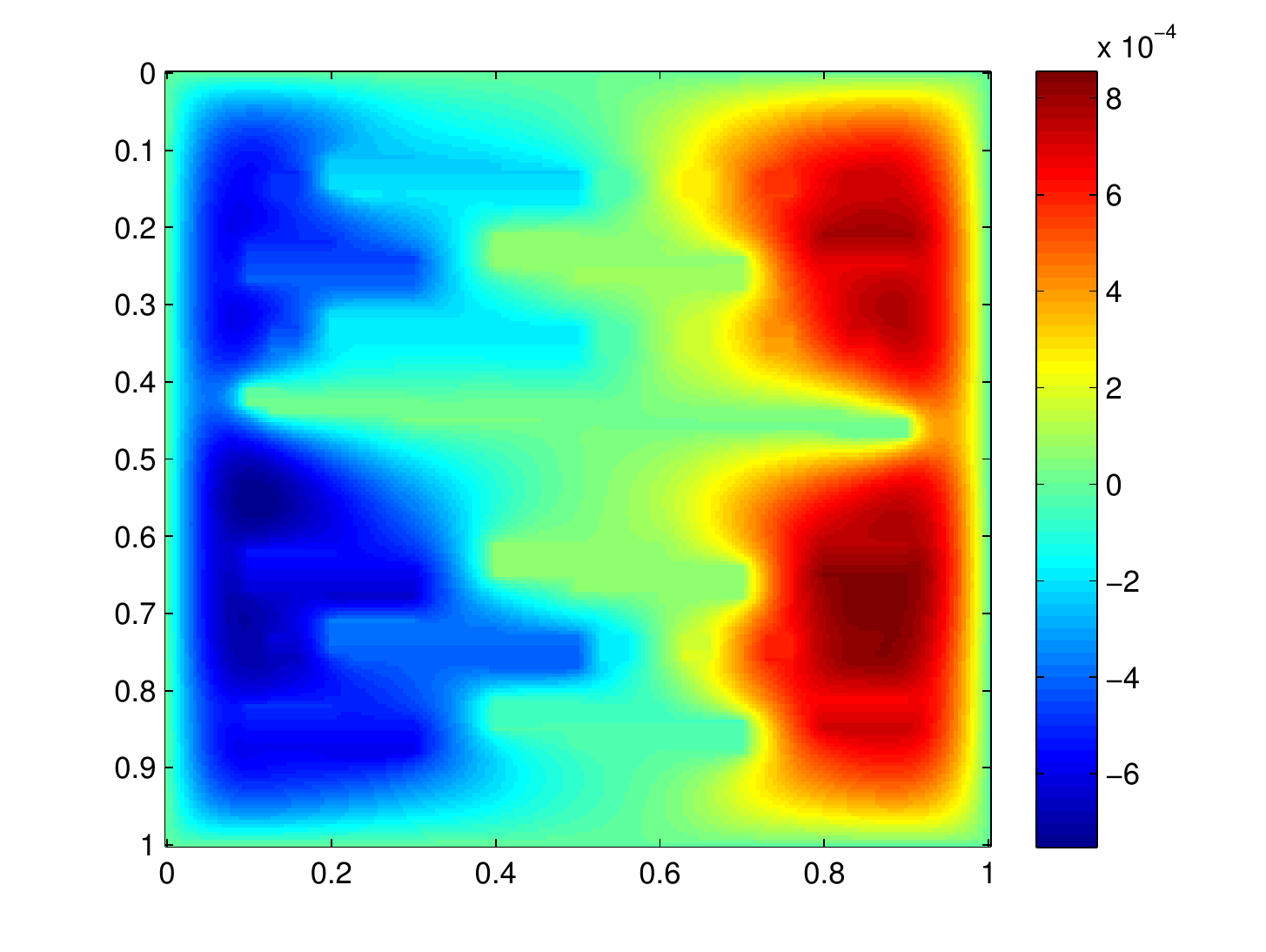}}
	\subfigure[Second component of $u$.]{
		\includegraphics[width=2in]{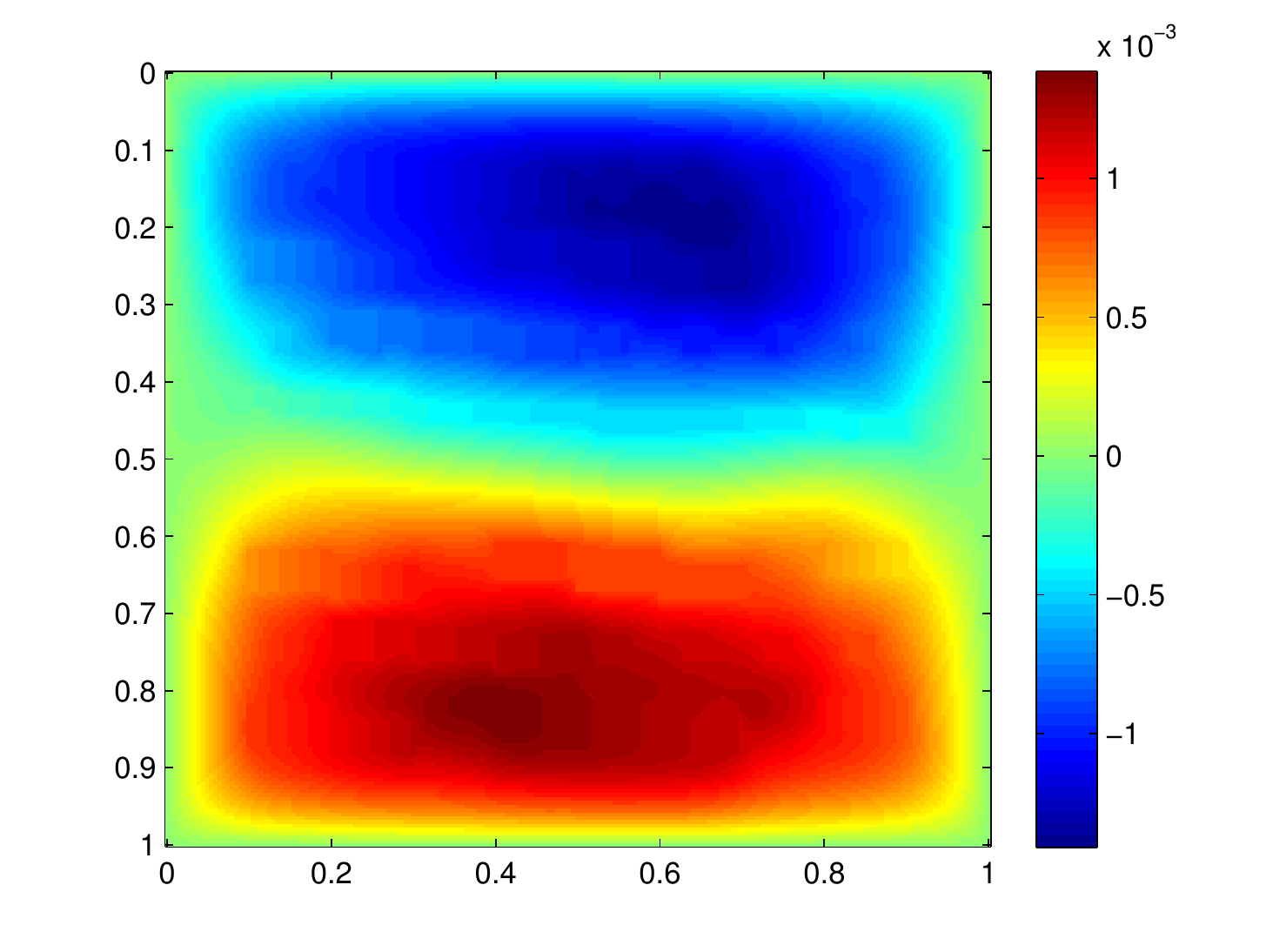}}	
	\caption{Reference solution for test model 1 at $T = 100$.}
	\label{fig:fine1}
\end{figure}

\begin{figure}
	\centering
	\subfigure[Pressure $p$.]{
		\includegraphics[width=2in]{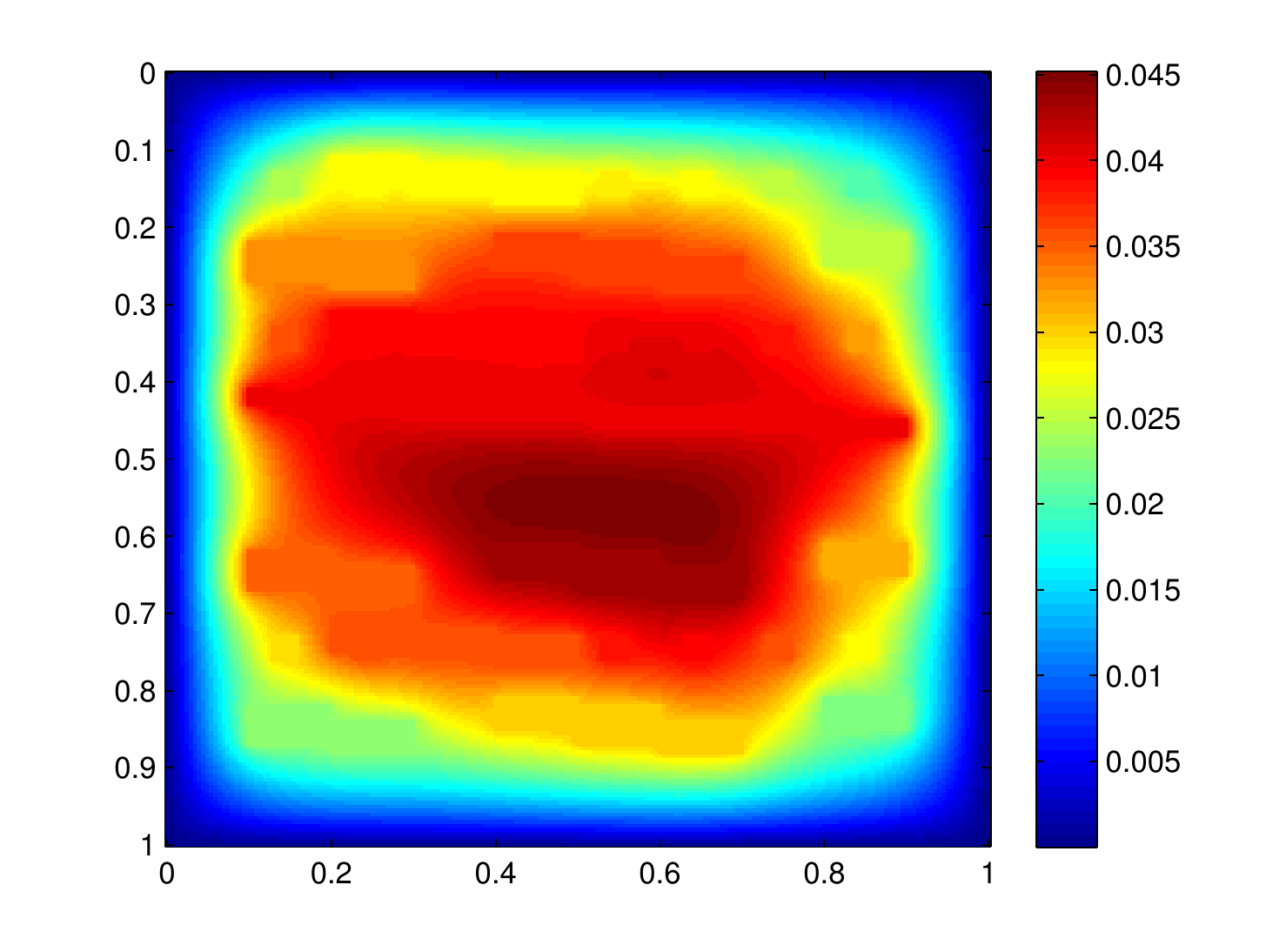}}
	\subfigure[First component of $u$.]{
		\includegraphics[width=2in]{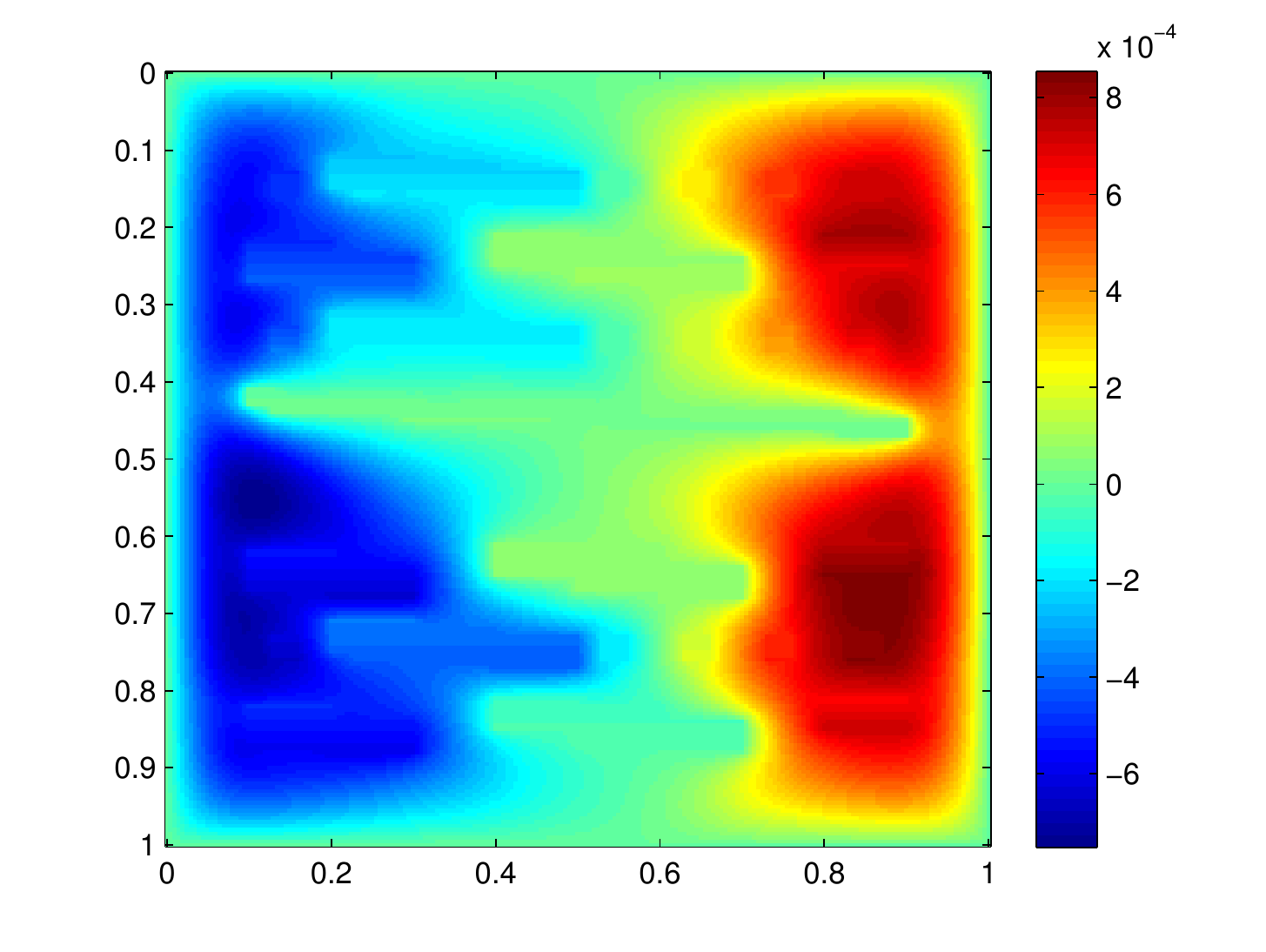}}
	\subfigure[Second component of $u$.]{
		\includegraphics[width=2in]{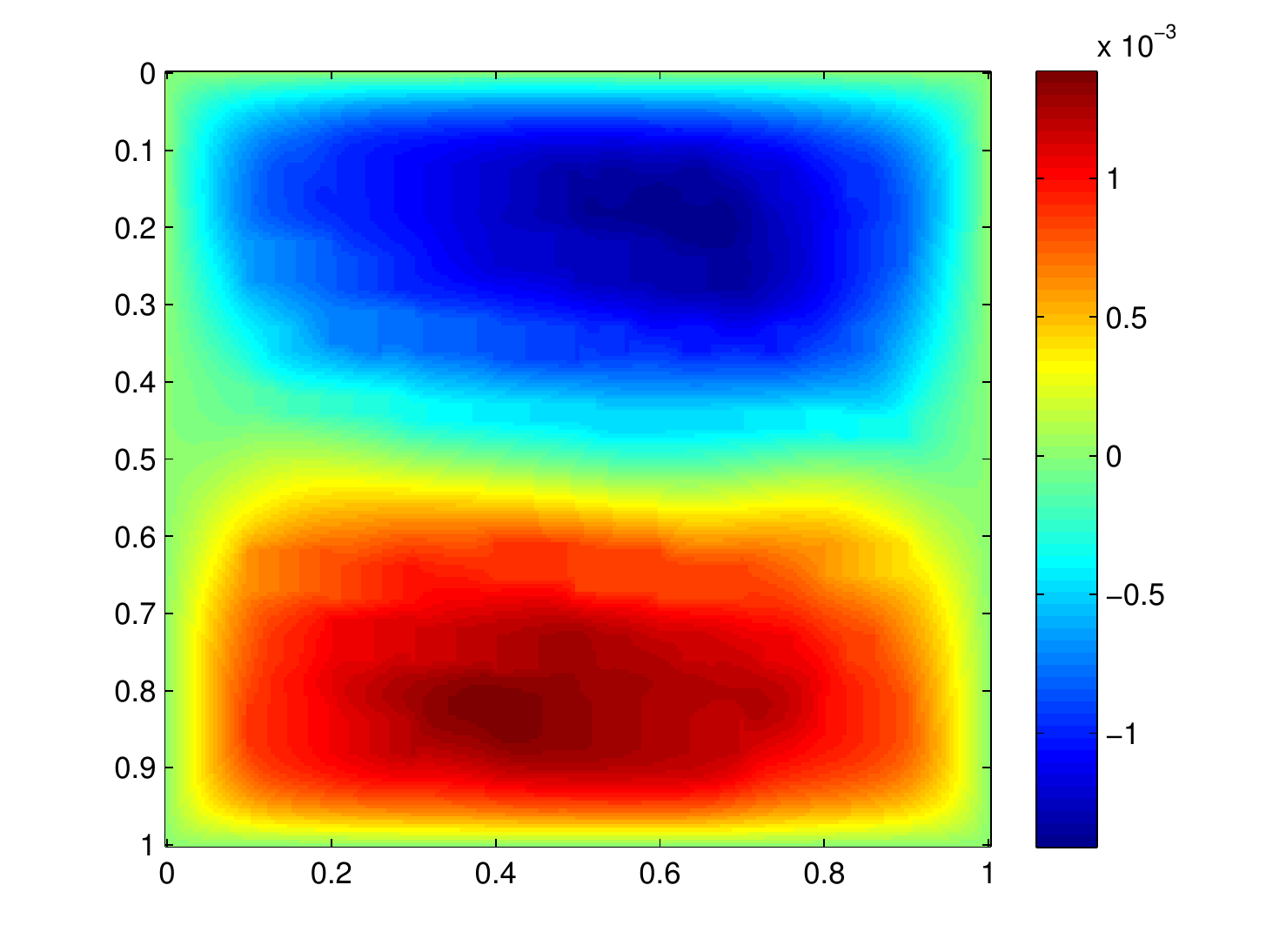}}	
	\caption{Multiscale solution for test model $1$ at $T = 100$ with $H =\sqrt{2}/40$, $m=6$, and $J=4$.}
	\label{fig:ms1} 
\end{figure}

\subsection{Test model 2}
In this subsection, we apply the proposed method to test model $2$ with the same choices for $H$, $J$, and $m$ as in the previous subsection. Table \ref{ta:h2} shows the results for different values of the coarse grid size $H$ (and thus $m$) with a fixed number of local basis functions. The reference and the multiscale solutions are sketched in Figures \ref{fig:fine2} and \ref{fig:ms2}, respectively. As before, the impact of the number of oversampling layers and number of basis functions are investigated and depicted in Figure \ref{fig:ovnb2}. One can observe that the multiscale approximation converges to the reference solution as $H$ decreases. Note that Figure \ref{fig:ovnb2} (left) suggests to choose $m \geq 5$, which is in line with the theoretical findings in \cite{chung2017constraint} that the multiscale features are captured when using a sufficiently large number of oversampling layers. As before, also the number of eigenfunctions in the auxiliary spaces can enhance the performance of the multiscale method, see Figure \ref{fig:ovnb2} (right).
\begin{table}
\centering \begin{tabular}{c|c|c||c|c|c|c}%\hline
$J$ &$H$& $m$  & $e_{L^2}^u$   & $e_{a}^u$& $e_{L^2}^p$   & $e_{b}^p$ \tabularnewline\hline
		4&$\sqrt{2}$/10	&4&2.22e-02 &  5.14e-01 & 9.64e-05 & 3.59e-02      \tabularnewline\hline
		4&$\sqrt{2}$/20	&5&3.95e-03 &  2.06e-01 & 2.77e-05 & 1.49e-02      \tabularnewline\hline
		4&$\sqrt{2}$/40	&6&4.94e-04 &  5.60e-02 & 7.81e-06 & 4.50e-03     \tabularnewline%\hline
	\end{tabular}
	\caption{Numerical results with varying coarse grid size $H$ for test model 2.}
	\label{ta:h2}
\end{table}%%% %log(H)/log(1/10)*4

\begin{figure}
	\centering
	% This file was created by matlab2tikz.
%
%The latest updates can be retrieved from
%  http://www.mathworks.com/matlabcentral/fileexchange/22022-matlab2tikz-matlab2tikz
%where you can also make suggestions and rate matlab2tikz.
%
\begin{tikzpicture}

\begin{axis}[%
width=2.4in,
height=2in,
scale only axis,
xmin=1,
xmax=8,
xlabel={Number of oversampling layers},
ymin=0,
ymax=1,
ylabel={Relative energy error},
axis background/.style={fill=white},
legend style={legend cell align=left, align=left, draw=black},
legend style={at={(0.55,0.75)},anchor=west}
]
\addplot [color=red, mark size=2.5pt, mark=square, mark options=solid]
  table[row sep=crcr]{%
1	0.9969\\
2	0.9569\\
3	0.3612\\
4	0.0591\\
5	0.0577\\
6	0.0576\\
7	0.0576\\
8	0.0575\\
};
\addlegendentry{$u$ ($J=4$)}

\addplot [color=red, dashed, mark size=2.5pt, mark=square, mark options=solid]
  table[row sep=crcr]{%
1	0.987\\
2	0.9557\\
3	0.5285\\
4	0.0953\\
5	0.0153\\
6	0.0045\\
7	0.0039\\
8	0.0039\\
};
\addlegendentry{$p$ ($J=4$)}

\addplot [color=blue, mark size=2.5pt, mark=o, mark options=solid]
  table[row sep=crcr]{%
1	0.9941\\
2	0.9368\\
3	0.3472\\
4	0.1231\\
5	0.0856\\
6	0.0814\\
7	0.0814\\
8	0.0812\\
};
\addlegendentry{$u$ ($J=2$)}

\addplot [color=blue, dashed, mark size=2.5pt, mark=o, mark options=solid]
  table[row sep=crcr]{%
1	0.9867\\
2	0.9421\\
3	0.4143\\
4	0.0627\\
5	0.012\\
6	0.0085\\
7	0.0085\\
8	0.0084\\
};
\addlegendentry{$p$ ($J=2$)}

\end{axis}
\end{tikzpicture}%
	% This file was created by matlab2tikz.
%
%The latest updates can be retrieved from
%  http://www.mathworks.com/matlabcentral/fileexchange/22022-matlab2tikz-matlab2tikz
%where you can also make suggestions and rate matlab2tikz.
%
\begin{tikzpicture}

\begin{axis}[%
width=2.4in,
height=2in,
scale only axis,
xmin=1,
xmax=8,
xlabel={Number of basis functions},
ymin=0,
ymax=1,
ylabel={Relative energy error},
axis background/.style={fill=white},
legend style={legend cell align=left, align=left, draw=black},
legend style={at={(0.55,0.75)},anchor=west}
]
\addplot [color=red, mark size=2.5pt, mark=square, mark options=solid]
  table[row sep=crcr]{%
1	0.651\\
2	0.0821\\
3	0.0751\\
4	0.0576\\
5	0.0434\\
6	0.0361\\
7	0.0336\\
8	0.0296\\
};
\addlegendentry{$u$ ($m=6$)}

\addplot [color=red, dashed, mark size=2.5pt, mark=square, mark options=solid]
  table[row sep=crcr]{%
1	0.0218\\
2	0.0085\\
3	0.0057\\
4	0.0045\\
5	0.0034\\
6	0.0028\\
7	0.0025\\
8	0.0022\\
};
\addlegendentry{$p$ ($m=6$)}

\addplot [color=blue, mark size=2.5pt, mark=o, mark options=solid]
  table[row sep=crcr]{%
1	0.743\\
2	0.3501\\
3	0.342\\
4	0.3624\\
5	0.3464\\
6	0.3261\\
7	0.2963\\
8	0.2588\\
};
\addlegendentry{$u$ ($m=3$)}

\addplot [color=blue, dashed, mark size=2.5pt, mark=o, mark options=solid]
  table[row sep=crcr]{%
1	0.3734\\
2	0.4143\\
3	0.5079\\
4	0.5285\\
5	0.5214\\
6	0.4997\\
7	0.4715\\
8	0.4368\\
};
\addlegendentry{$p$ ($m=3$)}

\end{axis}
\end{tikzpicture}%
	\caption{Relative energy error (test model 2) for $H=\sqrt{2}/40$ and fixed $J$ (left), fixed $m$ (right).}
\label{fig:ovnb2}
\end{figure}

\begin{figure}
	\centering
	\subfigure[Pressure $p$.]{
		\includegraphics[width=2in]{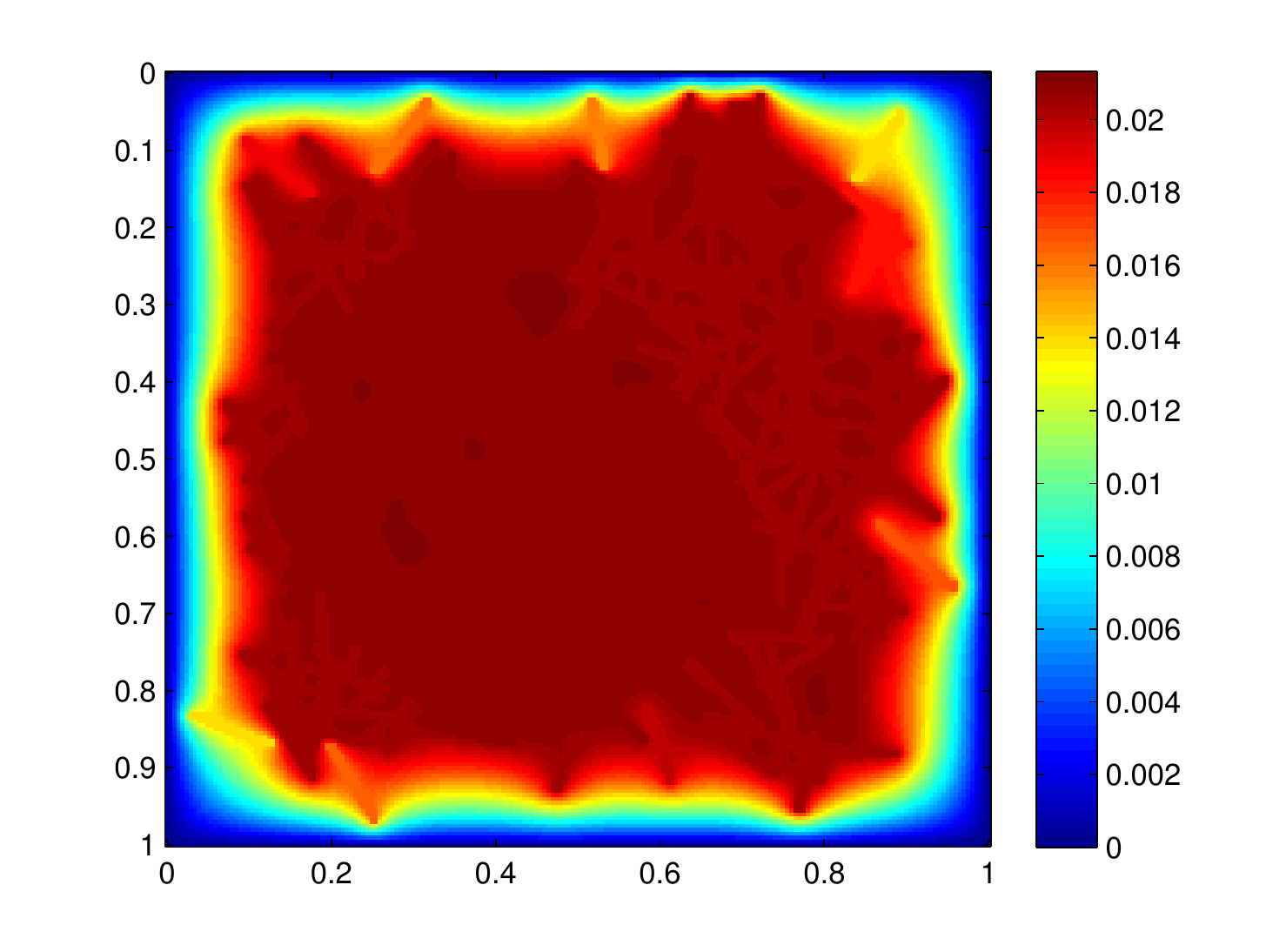}}
	\subfigure[First component of $u$.]{
		\includegraphics[width=2in]{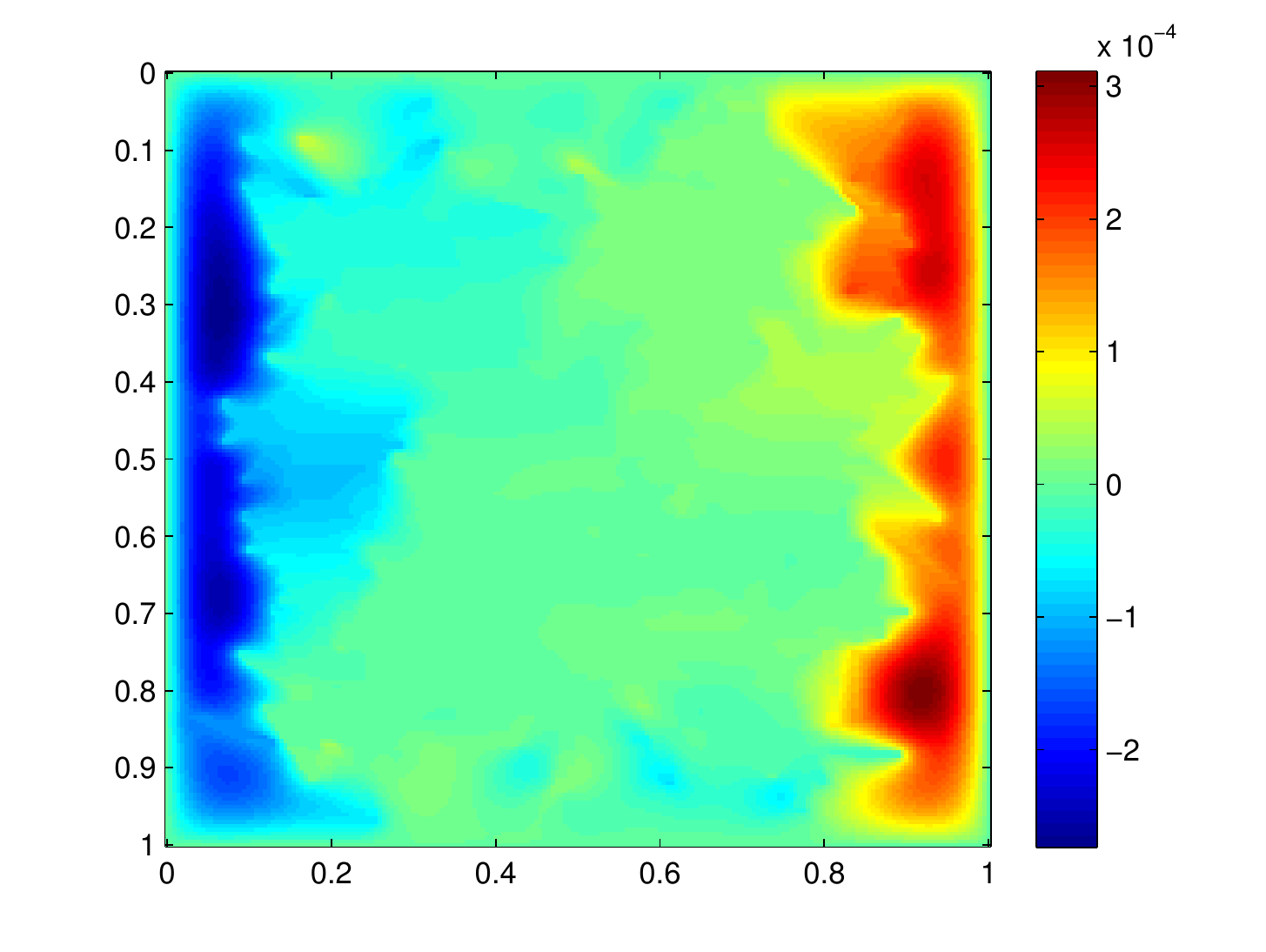}}
	\subfigure[Second component of $u$.]{
		\includegraphics[width=2in]{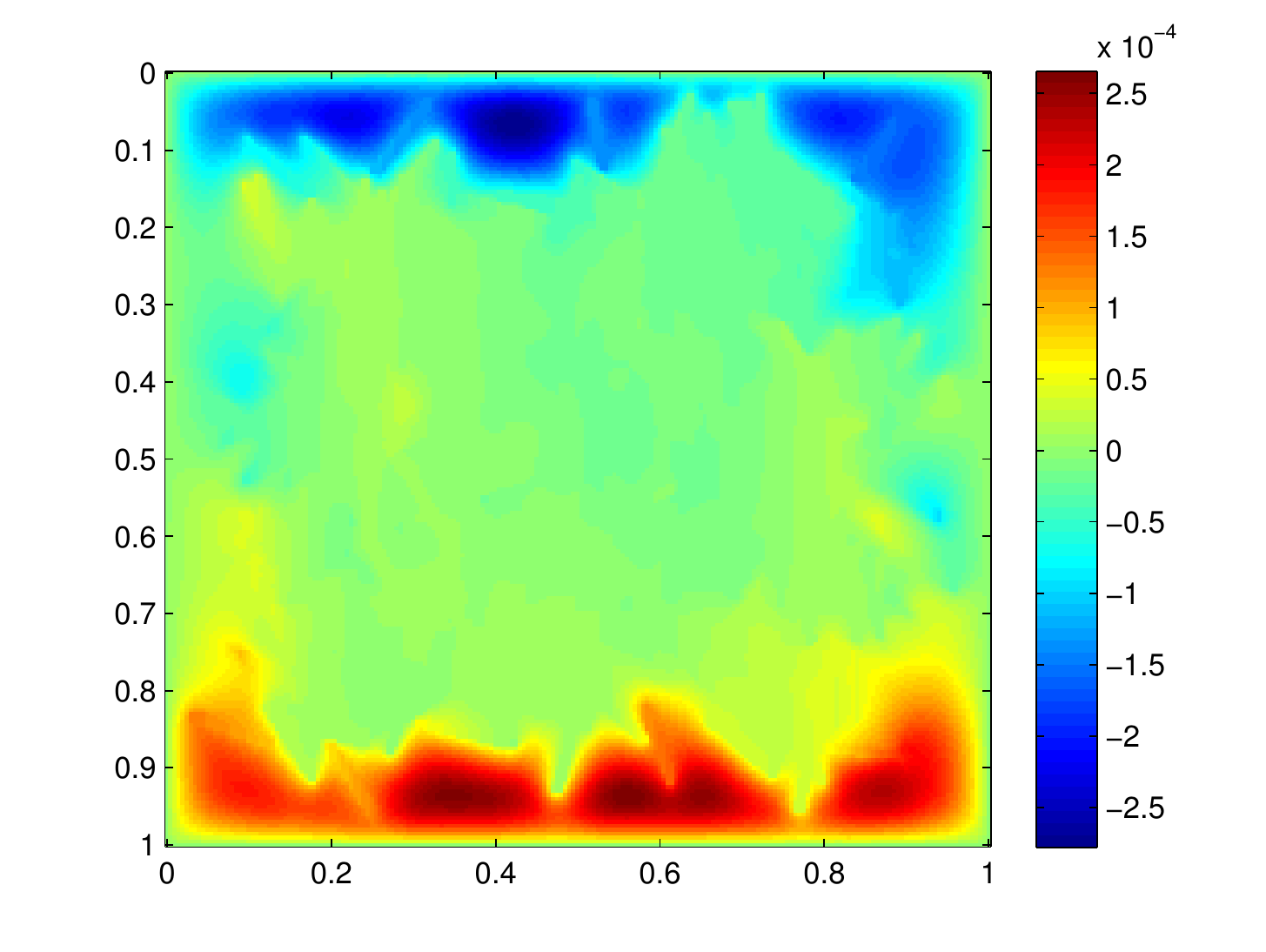}}	
	\caption{Reference solution for test model 2 at $T = 100$.}
	\label{fig:fine2}
\end{figure}

\begin{figure}
	\centering
	\subfigure[Pressure $p$.]{
		\includegraphics[width=2in]{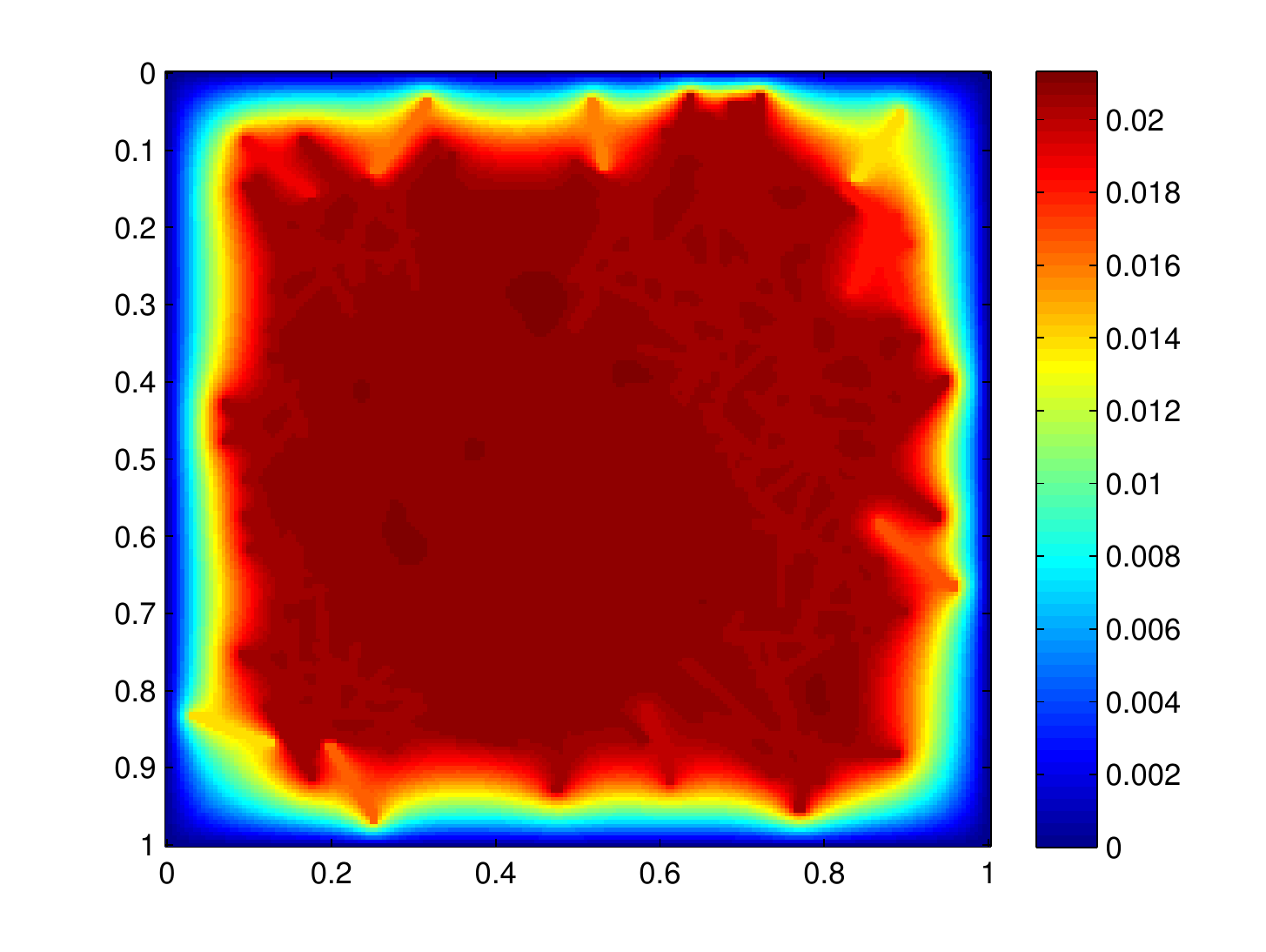}}
	\subfigure[First component of $u$.]{
		\includegraphics[width=2in]{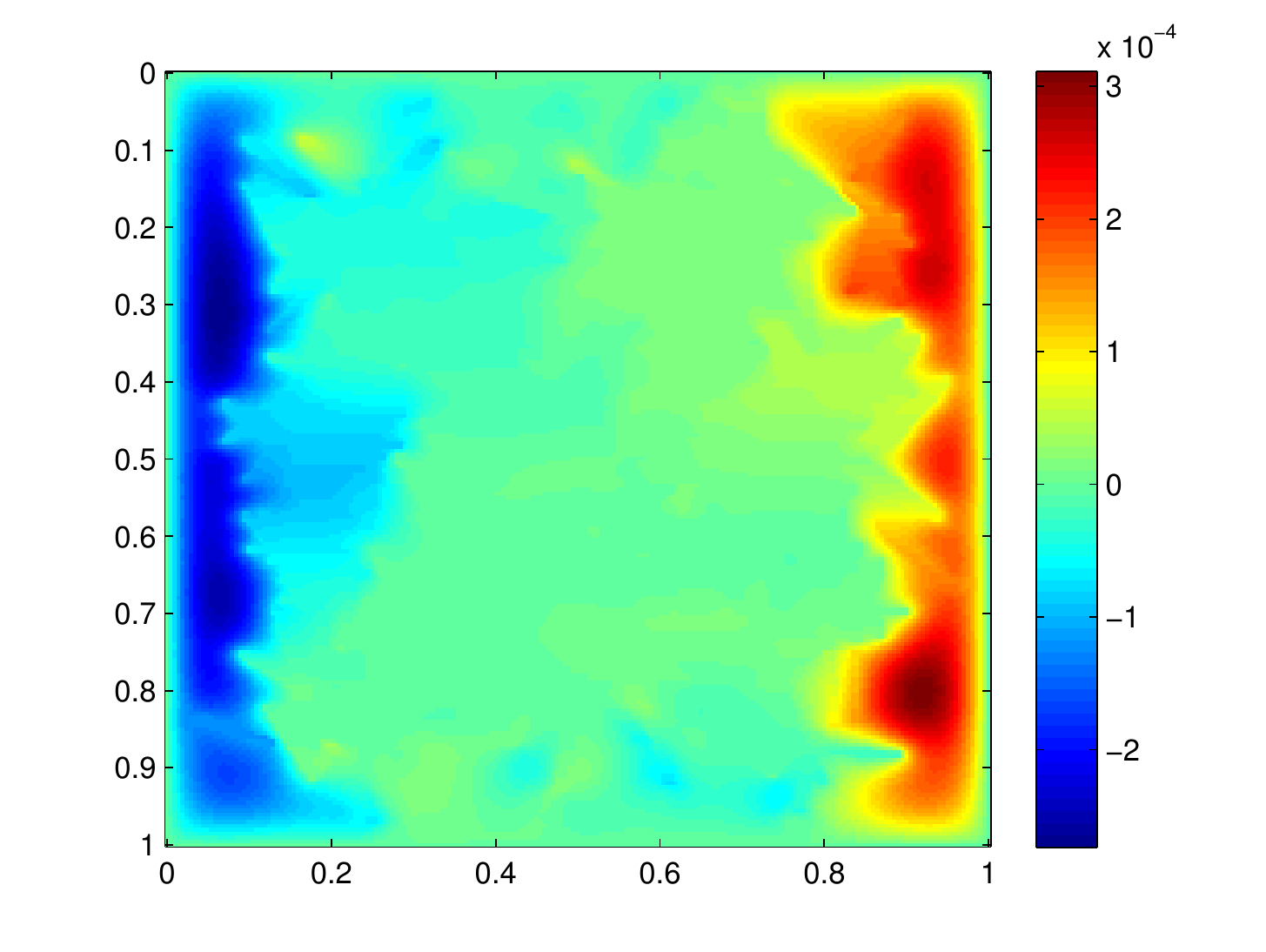}}
	\subfigure[Second component of $u$.]{
		\includegraphics[width=2in]{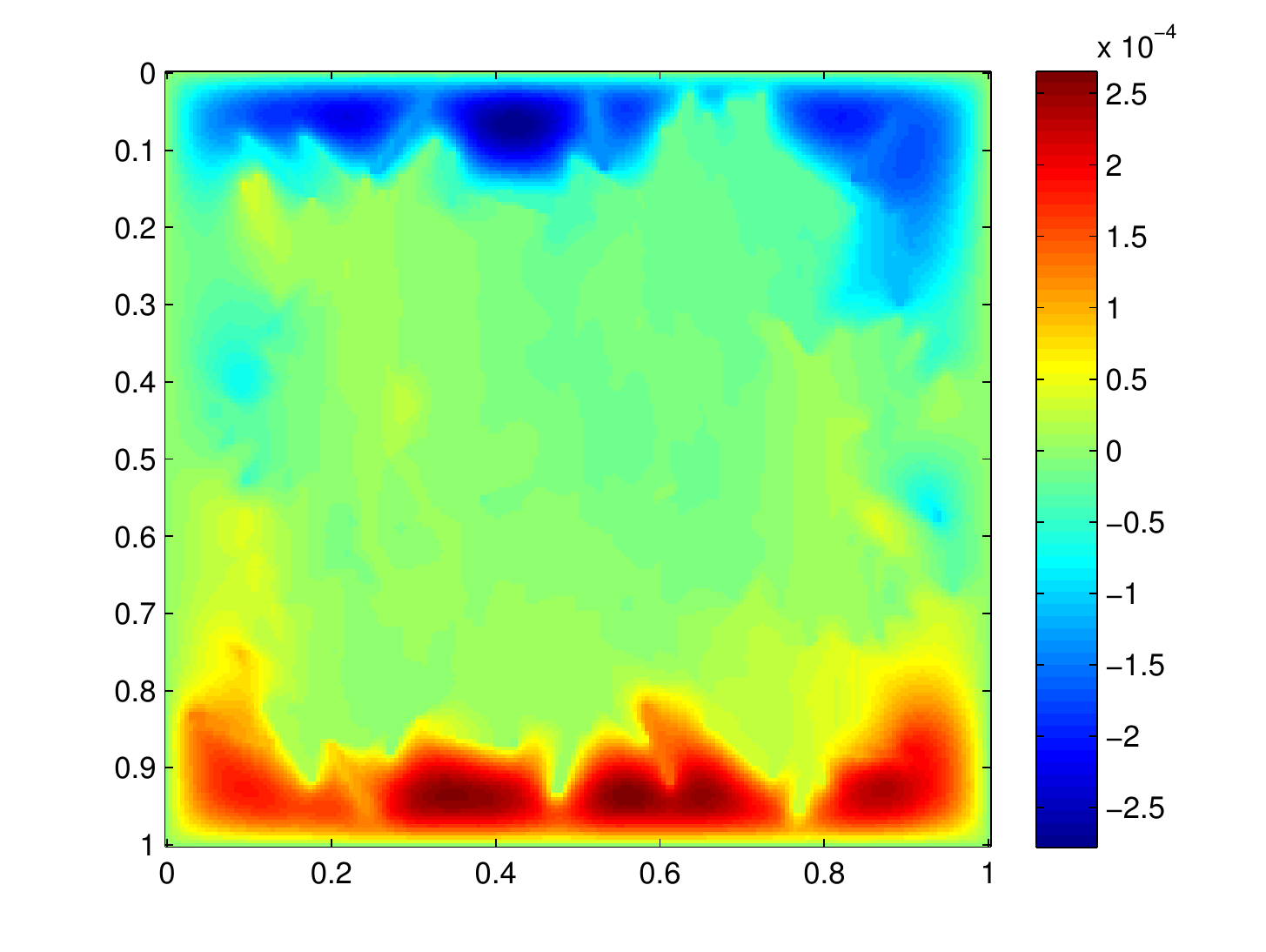}}	
	\caption{Multiscale solution for test model 2 at $T = 100$ with $H =\sqrt{2}/40$, $m=6$, and $J=4$.}
	\label{fig:ms2} %% label for entire figure
\end{figure}
\section{Conclusion}\label{sec:con}
In this work, we have proposed a Generalized Multiscale Finite Element Method based on the idea of constraint energy minimization \cite{chung2017constraint} for solving the problem of linear heterogeneous poroelasticity. The spatial discretization is based on CEM-GMsFEM which provides a framework to systematically construct multiscale basis functions for both displacement and pressure. The multiscale basis functions with locally minimal energy are constructed by employing the techniques of oversampling, which leads to an improved accuracy in the simulations. Combined with the implicit Euler scheme for the time discretization, we have shown that the fully discrete method has optimal convergence rates despite the heterogeneities of the media. Numerical results have been presented to illustrate the performance of the proposed method.

\bibliographystyle{plain}
\bibliography{ref_poro}

\end{document}